\numberwithin{equation}{section}
\theoremstyle{plain}
\newtheorem{theorem}{Theorem}[section]
\newtheorem{lemma}[theorem]{Lemma}
\DeclarePairedDelimiter\abs{\lvert}{\rvert}
\def\phi{\varphi}
\def\dilate#1{(#1)}
\DeclarePairedDelimiterXPP\EE[1]{\E}{\lparen}{\rparen}{}{\renewcommand\given{\SetSymbol[\delimsize]}#1} 
\providecommand\given{}
\newcommand\SetSymbol[1][]{%
\nonscript\:#1\vert
\allowbreak
\nonscript\:
\mathopen{}}
\DeclarePairedDelimiterX\Set[1]\{\}{%
\renewcommand\given{\SetSymbol[\delimsize]}
#1
}
\newcommand{\one}{\mathbf{1}}
\newcommand*{\N}{\mathbb{N}}
\newcommand*{\Z}{\mathbb{Z}}
\newcommand*{\R}{\mathbb{R}}
\newcommand{\dif}{\mathop{}\!\mathrm{d}} 
\def\calB{\mathcal{B}}
\def\calD{\mathcal{D}}
\def\calN{\mathcal{N}}
\DeclareMathOperator{\supp}{supp}
\title[Reconstruction theorem]{Reconstruction theorem in quasinormed spaces}
\author[P.~Zorin-Kranich]{Pavel Zorin-Kranich}
\address{Mathematical Institute\\ University of Bonn}
\email{pzorin@uni-bonn.de}
\subjclass[2020]{46F10 (Primary) 60L30, 46E35 (Secondary)}
\begin{document}

\begin{abstract}
We extend the Hairer reconstruction theorem for distributions due to Caravenna and Zambotti \cite{MR4261666} to general function spaces satisfying a translation and scaling condition.
This includes Besov type spaces with exponents below $1$ and Triebel--Lizorkin type spaces.
\end{abstract}
\maketitle

\section{Introduction}

Hairer's reconstruction theorem, originally proved in \cite[Theorem 3.10]{MR3274562} in the framework of regularity structures, gives conditions under which one can construct a distribution on $\R^{d}$ with a given local behavior.
Similarly to the sewing lemma in rough integration, see e.g.\ \cite[Lemma 4.2]{MR4174393}, a reconstruction theorem for distributions, that does not require regularity structures in its statement, was developed in \cite{MR4261666}.
In this article, we isolate the main estimate used in \cite{MR4261666} (Theorem~\ref{thm:local-reconstruction}) and formulate sufficient conditions on function space quasinorms for which the reconstruction theorem for distributions holds (Section~\ref{sec:fun-spaces}).

A Besov space version of the reconstruction theorem for regularity structures was proved in \cite{MR3684891}.
Our sufficient conditions cover Besov spaces with exponents $p,q\in [1,\infty]$, in which case the result has been independently obtained in \cite{arxiv:2106.12528}, Triebel--Lizorkin spaces with exponents $p,q \in (1,\infty)$, as well as some Besov spaces with exponents below $1$.
For Besov spaces with exponents below $1$, we observe additional restrictions on the integrability and regularity exponents.
Although we do not know if they are always necessary, in one dimension they are consistent with the restrictions that appear in the sewing lemma (see Section~\ref{sec:sewing}).

We begin with the formulation of the main pointwise estimate for reconstruction.
As test functions, we will use Hölder functions of some order $r > 0$.
Let $\tilde{r}\in\N$ with $\tilde{r} < r \leq \tilde{r}+1$ be the largest integer strictly smaller than $r$, and let
\begin{equation}
\label{eq:r-Holder}
\calB_{r} := \Set{ \xi\in C^{\infty}_{c}(\R^{d}) \given \supp\xi \subseteq B(0,1/2), \forall x,x' \in \R^{d} \ \abs{D^{\tilde{r}}\xi(x)-D^{\tilde{r}}\xi(x')} \leq \abs{x-x'}^{r-\tilde{r}} },
\end{equation}
For a function $\xi$ defined on $\R^{d}$, we denote by
\begin{equation}
\label{eq:dilate}
\xi_{x}^{\dilate{k}}(y) := 2^{dk} \xi(2^{k}(y-x))
\end{equation}
an $L^{1}$ scaled version of $\xi$ at scale $2^{-k}$ centered at $x$.
We also use the conventions $\xi^{\dilate{k}} := \xi^{\dilate{k}}_{0}$ and $\tilde{\xi}(y)=\xi(-y)$.
We denote integral averages by $\fint_{z \in Z} = \abs{Z}^{-1} \int_{z\in Z}$.
We write $a \lesssim b$ if there is a constant $C<\infty$, depending only on parameters $\gamma,\alpha,r,\phi,p,q,\calN$, such that $a \leq C b$.

\begin{theorem}[Local reconstruction]
\label{thm:local-reconstruction}
Let $\phi\in C^{\infty}_{c}(\R^{d})$ with $\supp \phi \subseteq B(0,1/2)$ and $\int\phi = 1$.
Let $(F_{x})_{x\in\R^{d}}$ be a Borel measurable family of distributions on $\R^{d}$.
Let $x\in\R^{d}$, $k\in\Z$, and $r>0$ be such that
\begin{equation}
\label{eq:local-coherence}
\begin{split}
H(k,x) &:=
\sum_{l\geq 0} 2^{-lr} \fint_{\abs{x-y} \leq 2^{-k}} \abs[\big]{(F_{y}-F_{x})(\phi_{y}^{\dilate{k+l}})} \dif y
\\ &+ \sum_{l\geq 0} \fint_{\abs{z-x} \leq 2^{-k}} \fint_{\abs{y-z} \leq 2^{-k-l}} \abs[\Big]{(F_{z}-F_{y})(\phi_{y}^{\dilate{k+l}})} \dif y \dif z
< \infty.
\end{split}
\end{equation}
Let $\rho$ be a smooth function, supported on $B(0,1/2)$, depending only on $\phi,r$, defined in \eqref{eq:diffconv}.
Then, the functions
\begin{equation}
\label{eq:fn}
f_n(z) := F_z(\rho^{\dilate{n}}_z),
\quad
n\in\N,
\end{equation}
are integrable on $B(x,2^{-k})$ for $n\geq k$ and converge, in the sense of distributions, as $n\to\infty$, to a distribution $f\in \calD'(B(x,2^{-k}))$ such that
\begin{equation} \label{eq:loc-rec-bd}
\sup_{\xi \in \calB_{r}} \abs{ (f-F_x)(\xi_x^{\dilate{k}})}
\lesssim H(k,x).
\end{equation}
\end{theorem}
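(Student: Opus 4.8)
We outline the proof, which follows the scheme of \cite{MR4261666}. The plan is to realize $f$ as the distributional limit of the explicit approximations $f_{n}$ from \eqref{eq:fn} and to deduce \eqref{eq:loc-rec-bd} from the telescoping identity
\begin{equation*}
(f-F_{x})(\xi_{x}^{\dilate{k}}) = (f_{k}-F_{x})(\xi_{x}^{\dilate{k}}) + \sum_{m\geq k} (f_{m+1}-f_{m})(\xi_{x}^{\dilate{k}}),
\end{equation*}
bounding the base term by the first sum in \eqref{eq:local-coherence} and the increments by the second. The decisive ingredient is the construction of $\rho$ in \eqref{eq:diffconv}: it is normalized by $\int\rho=1$ and supported in $B(0,1/2)$, and it is tailored so that the kernels governing the passage between consecutive dyadic scales carry extra cancellation (vanishing moments up to order $\tilde{r}$). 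This ``semigroup up to a smoothing remainder'' structure is what turns the qualitative finiteness $H(k,x)<\infty$ into the quantitative bound, and it is precisely why one cannot take $\rho=\phi$: the test functions in $\calB_{r}$ possess only Hölder regularity of order $r$ and cannot themselves furnish the cancellations needed for the series in \eqref{eq:loc-rec-bd} to converge.

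First I would dispose of the soft assertions. For $n\geq k$ and $z\in B(x,2^{-k})$ the function $\rho_{z}^{\dilate{n}}$ is supported in $B(x,2^{-k+1})$, so $f_{n}(z)=F_{z}(\rho_{z}^{\dilate{n}})$ is well defined; expanding $\rho_{z}^{\dilate{n}}$ telescopically into the mollifiers $\phi_{y}^{\dilate{k+l}}$ appearing in \eqref{eq:local-coherence} and inserting the reference value $F_{x}$ bounds $\int_{B(x,2^{-k})}\abs{f_{n}}$ by a truncation of $H(k,x)$ plus a fixed smooth term, giving local integrability. The same expansion applied to $f_{m+1}-f_{m}$ shows that $\sum_{m\geq k}\abs{(f_{m+1}-f_{m})(\eta)}<\infty$ for every $\eta\in C^{\infty}_{c}(B(x,2^{-k}))$, hence $(f_{n})$ converges in $\calD'(B(x,2^{-k}))$ to a distribution $f$, whose support is controlled by that of $\rho$.

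For the quantitative estimate I would treat the two contributions separately, uniformly over $\xi\in\calB_{r}$. \emph{Increments.} I would use the defining relation of $\rho$ to rewrite $f_{m+1}(z)=F_{z}(\rho_{z}^{\dilate{m+1}})$ through the scale-$2^{-m}$ mollification convolved with $\phi_{z}^{\dilate{m+1}}$ plus the smoothing remainder; subtracting $f_{m}$ and pairing with $\xi_{x}^{\dilate{k}}$ then produces an integral of germ differences $(F_{z}-F_{y})$, tested against mollifiers at scale $\sim 2^{-m}$ centered at $y$, over a region where $z$ lies within $O(2^{-k})$ of $x$ and $y$ within $O(2^{-m})$ of $z$, together with a remainder controlled by the vanishing moments of $\rho$'s transition kernel and the Hölder exponent of $\xi$. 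Summing over $m=k+l$, $l\geq 0$, and discarding harmless scale-uniform factors reproduces, up to a constant, the second sum in \eqref{eq:local-coherence}. \emph{Base term.} I would insert $F_{x}$ into $f_{k}(z)=F_{z}(\rho_{z}^{\dilate{k}})$, splitting $(f_{k}-F_{x})(\xi_{x}^{\dilate{k}})$ into a germ-difference part $\int(F_{z}-F_{x})(\rho_{z}^{\dilate{k}})\,\xi_{x}^{\dilate{k}}(z)\dif z$ and the term $F_{x}\bigl(\rho^{\dilate{k}}*\xi_{x}^{\dilate{k}}-\xi_{x}^{\dilate{k}}\bigr)$; telescoping the latter over the dyadic scales $2^{-k-l}$ and using that each element of $\calB_{r}$ is of class $C^{r}$ produces the weights $2^{-lr}$, so both pieces are dominated by the first sum in \eqref{eq:local-coherence}. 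Adding the two contributions and using $\sum_{l\geq 0}2^{-lr}<\infty$ (here $r>0$) yields \eqref{eq:loc-rec-bd}.

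The technical core, and the step I expect to be the main obstacle, is the bookkeeping that aligns the $\rho$-based telescoping with the precise $\phi$-based averages in \eqref{eq:local-coherence}: one must track how nested mollifications at scales $2^{-k},2^{-k-1},\dots$ generate the double average with the intermediate point $z$, verify that the spatial supports and Borel measurability permit Fubini throughout, and — most delicately — confirm that every cancellation required for the $l$-series to converge comes from the vanishing moments built into $\rho$ rather than from smoothness of $\xi$, which is unavailable for $\xi\in\calB_{r}$. The remaining ingredients are routine given that $\phi$ and $\rho$ are supported in $B(0,1/2)$.
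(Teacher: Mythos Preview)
Your overall architecture---telescoping the approximants \eqref{eq:fn} and exploiting the factorization \eqref{eq:diffconv}---matches the paper, but your allocation of terms to the two sums in \eqref{eq:local-coherence} is off in a way that leaves a real gap. In your increment step you split $(f_{m+1}-f_{m})(\xi_{x}^{\dilate{k}})$ into a germ-difference piece carrying $(F_{z}-F_{y})$ and a ``remainder controlled by the vanishing moments \dots\ and the H\"older exponent of $\xi$''. That remainder is
\[
\int F_{y}(\phi_{y}^{\dilate{m+1}})\,(\psi^{\dilate{m}}*\xi_{x}^{\dilate{k}})(y)\,\dif y,
\]
which involves $F_{y}$ \emph{alone}, not a germ difference; the bound $\abs{\psi^{\dilate{m}}*\xi_{x}^{\dilate{k}}}\lesssim 2^{-(m-k)r}$ does not help, because $H(k,x)$ contains no control on $\abs{F_{y}(\phi_{y}^{\dilate{m+1}})}$. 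Symmetrically, in your base term you isolate $F_{x}\bigl(\tilde\rho^{\dilate{k}}*\xi_{x}^{\dilate{k}}-\xi_{x}^{\dilate{k}}\bigr)$ and claim that telescoping it over scales yields the first sum in \eqref{eq:local-coherence}; but every piece of that telescope involves only $F_{x}$, again not a germ difference, so $H(k,x)$ cannot bound it either.

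The missing idea is to insert $F_{x}$ once more inside the increment remainder (equivalently, to telescope $f_{x,n}(z):=(F_{z}-F_{x})(\rho_{z}^{\dilate{n}})$ from the outset, as the paper does). The increment then splits into a $g'$ piece carrying $(F_{y}-F_{x})$ and a $g''$ piece carrying $(F_{z}-F_{y})$; the estimate $\abs{\psi^{\dilate{k+l}}*\xi_{x}^{\dilate{k}}}\lesssim 2^{-lr}$ is applied to $g'$, and \emph{this} is where the weights $2^{-lr}$ in the first sum of \eqref{eq:local-coherence} come from---not from telescoping the base term. Your two uncontrolled ``$F$ alone'' contributions then never appear: the $F_{x}$ part is absorbed into the soft convergence $F_{x}*\tilde\rho^{\dilate{n}}\to F_{x}$, and the base term $f_{x,k}(\xi_{x}^{\dilate{k}})$ is handled by the $l=0$ summands of \emph{both} sums. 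With this reallocation your outline becomes the paper's proof.
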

The condition \eqref{eq:loc-rec-bd} clearly does not determine the distribution $f$ uniquely; for instance, it would also hold with $F_{x}$ in place of $f$.
The main point of Theorem~\ref{thm:local-reconstruction} is that the construction of $f$ does not depend on $x,k$.
In particular, if $H(x,k) < \infty$ for all pairs $(x,k)$ in some set $X$, then we obtain a distribution $f$ on the open set $\cup_{(x,k)\in X} B(x,2^{-k})$, and the estimate \eqref{eq:loc-rec-bd} holds for all $(x,k) \in X$.

The condition \eqref{eq:local-coherence} is a version of the \emph{coherence} condition in \cite[Definition 4.3]{MR4261666}, it expresses the idea that the family $F$ varies sufficiently slowly.
We formulate some function space consequences of Theorem~\ref{thm:local-reconstruction} in Section~\ref{sec:fun-spaces}.

Theorem~\ref{thm:local-reconstruction} is proved in Section~\ref{sec:proof}.
In Section~\ref{sec:sewing}, we compare the reconstruction theorem in Besov spaces (a special case of Theorem~\ref{thm:reconstruction}) and the sewing lemma in Besov spaces.
In Section~\ref{sec:negative-regularity}, we comment on a version of the reconstruction theorem for spaces with negative regularity.

\subsection*{Acknowledgment}
I thank the anonymous referees for detailed reports that helped to improve this article.

\section{Function spaces}
\label{sec:fun-spaces}
Let us now describe a class of function spaces in which the condition \eqref{eq:local-coherence} can be conveniently verified.
For simplicity, we consider function spaces on $\R^{d}$.
One could state similar results for open domains in $\R^{d}$, but it is difficult to foresee how much flexibility regarding the boundary behaviour would be useful.

We consider quasinorms $\calN$ defined on measurable functions $H : \N \times \R^{d} \to \overline{\R_{\geq 0}} = \R_{\geq 0} \cup \Set{\infty}$ and taking values in $\overline{\R_{\geq 0}}$.
A \emph{quasinorm} is a functional that is homogeneous and quasisubadditive:
\[
\calN(\lambda H) = \abs{\lambda} \calN(H)
\quad\text{and}\quad
\calN(H_{1}+H_{2}) \lesssim \calN(H_{1}) + \calN(H_{2}),
\]
for any $\lambda\in\R_{\geq 0}$ and any arguments $H,H_{1},H_{2}$.

The main assumption on $\calN$ is a scaling condition: there exists $\gamma > 0$ such that, for every $l\in\N$, we have
\begin{equation}
\label{eq:N-scaling}
\calN_{k,x} \fint_{\abs{z-x}\leq 2^{-k}} H(k+l,z) \dif z
\lesssim
2^{-l\gamma} \calN_{k,x} H(k,x).
\end{equation}

In addition to the above quantitative assumption, we also make the qualitative assumptions that $\calN$ satisfies a version of the monotone convergence theorem:
\begin{equation}
\label{eq:N-monotone-conv}
H_{n} \nearrow H
\implies
\calN_{k,x} H_{n}(k,x) \to \calN_{k,x} H(k,x),
\end{equation}
and that finiteness of $\calN H$ implies finiteness of the function $H$:
\begin{equation}
\label{eq:N-ae}
\calN_{k,x}H(k,x) < \infty
\implies
H<\infty \text{ a.e.}
\end{equation}

\begin{theorem}[Reconstruction]
\label{thm:reconstruction}
Let $\gamma>0$, $\alpha\geq 0$, and $r>0$ with $r > \alpha$.
Let $\calN$ be a quasinorm satisfying the conditions \eqref{eq:N-scaling}, \eqref{eq:N-monotone-conv}, \eqref{eq:N-ae}.

Let $\phi\in C^{\infty}_{c}(\R^{d})$ with $\supp \phi \subseteq B(0,1/2)$ and $\int\phi = 1$.
Let $(F_{x})_{x\in\R^{d}}$ be a Borel measurable family of distributions on $\R^{d}$ and $A \in \R_{>0}$ be such that, for every $l\in\N$, we have
\begin{equation}
\label{eq:reco-coherence}
\calN_{k,x} \fint_{\abs{h} \leq 2^{-k}} \abs{ (F_{x+h} - F_{x})(\phi^{\dilate{k+l}}_{x+h}) } \dif h
\leq
2^{l\alpha} A.
\end{equation}
Then, there exists a distribution $f \in \calD'(\R^{d})$ such that
\begin{equation} \label{eq:reco+1}
\calN_{k,x} \sup_{\xi \in \calB_{r}} \abs{ (f-F_x)(\xi_x^{\dilate{k}})}
\lesssim A.
\end{equation}
\end{theorem}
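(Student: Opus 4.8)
\emph{Strategy.} The plan is to deduce Theorem~\ref{thm:reconstruction} from Theorem~\ref{thm:local-reconstruction} by controlling the $\calN$-quasinorm of the pointwise coherence quantity $H(k,x)$ from \eqref{eq:local-coherence}. Once $\calN_{k,x}H(k,x)\lesssim A$ is established, \eqref{eq:N-ae} forces $H(k,x)<\infty$ for almost every $(k,x)$ — in particular $\Set{x\given H(0,x)<\infty}$ has full measure and hence is dense — so Theorem~\ref{thm:local-reconstruction} applies for such pairs, and combining its conclusion \eqref{eq:loc-rec-bd} with the bound on $\calN_{k,x}H(k,x)$ will give \eqref{eq:reco+1}. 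So the main step is to prove $\calN_{k,x}H(k,x)\lesssim A$.

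For the first sum in \eqref{eq:local-coherence}, the substitution $y=x+h$ identifies its $l$-th summand with $2^{-lr}\fint_{\abs{h}\leq 2^{-k}}\abs{(F_{x+h}-F_x)(\phi_{x+h}^{\dilate{k+l}})}\dif h$, i.e.\ $2^{-lr}$ times the integral whose $\calN_{k,x}$-quasinorm is bounded by $2^{l\alpha}A$ in \eqref{eq:reco-coherence}; hence this summand has $\calN_{k,x}$-quasinorm at most $2^{-l(r-\alpha)}A$, which is summable in $l$ since $r>\alpha$. For the second (double-average) sum, I would first use $y=z+h$ to rewrite its $l$-th summand as $\fint_{\abs{z-x}\leq 2^{-k}}G(k+l,z)\dif z$, where $G(m,z):=\fint_{\abs{h}\leq 2^{-m}}\abs{(F_{z+h}-F_z)(\phi_{z+h}^{\dilate{m}})}\dif h$ satisfies $\calN_{m,z}G(m,z)\leq A$ by \eqref{eq:reco-coherence} with $l=0$; the scaling hypothesis \eqref{eq:N-scaling} applied to $G$ then gives $\calN_{k,x}\fint_{\abs{z-x}\leq 2^{-k}}G(k+l,z)\dif z\lesssim 2^{-l\gamma}\calN_{k,x}G(k,x)\leq 2^{-l\gamma}A$, again summable since $\gamma>0$. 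In both cases the infinite sum is handled by quasi-subadditivity combined with the Aoki--Rolewicz trick — choose $\rho\in(0,1]$ with $\calN(\sum_l g_l)^\rho\lesssim\sum_l\calN(g_l)^\rho$ on finite partial sums — followed by the monotone convergence property \eqref{eq:N-monotone-conv} to pass to the full sum. This yields $\calN_{k,x}H(k,x)\lesssim A$.

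To produce the distribution, I would observe that for each $(k,x)$ with $H(k,x)<\infty$, Theorem~\ref{thm:local-reconstruction} gives a distribution in $\calD'(B(x,2^{-k}))$ realized as the distributional limit of the functions $f_n(z)=F_z(\rho_z^{\dilate{n}})$ of \eqref{eq:fn}, and this approximating sequence does not depend on $(k,x)$. Since the good balls $B(x,1)$ (those with $H(0,x)<\infty$) cover $\R^{d}$, every $\psi\in C^{\infty}_{c}(\R^{d})$ is a finite sum, via a partition of unity, of test functions supported in good balls, so $\lim_{n\to\infty}\int f_n\psi$ exists and these limits are mutually consistent; they define a distribution $f\in\calD'(\R^{d})$ whose restriction to each good ball is the local reconstruction there. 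Finally, since $\supp\xi\subseteq B(0,1/2)$ makes $\xi_x^{\dilate{k}}$ supported in $B(x,2^{-k})$, the estimate \eqref{eq:loc-rec-bd} gives $\sup_{\xi\in\calB_{r}}\abs{(f-F_x)(\xi_x^{\dilate{k}})}\lesssim H(k,x)$ for every $(k,x)$ with $H(k,x)<\infty$, and trivially when $H(k,x)=\infty$, hence everywhere on $\N\times\R^{d}$; after a routine check that the left-hand side is measurable in $(k,x)$ (restrict the supremum to a countable dense family in $\calB_{r}$; $x\mapsto F_x(\xi_x^{\dilate{k}})$ is measurable and $x\mapsto f(\xi_x^{\dilate{k}})$ is continuous), applying $\calN_{k,x}$ and using its monotonicity and homogeneity turns this into \eqref{eq:reco+1}.

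I expect the main obstacle to be the bookkeeping in the second step, in particular recognizing that the inner double average in $H$ is governed by \eqref{eq:reco-coherence} at the finer scale $k+l$ with vanishing extra index, and must then be lifted back to scale $k$ through the scaling condition \eqref{eq:N-scaling}, together with carrying out the two infinite summations with only quasi-subadditivity at hand. The gluing step is conceptually the crux, but becomes routine once the $(k,x)$-independence of the approximations $f_n$, emphasized right after Theorem~\ref{thm:local-reconstruction}, is used.
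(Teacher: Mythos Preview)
Your proposal is correct and follows essentially the same approach as the paper: bound $\calN_{k,x}H(k,x)$ by estimating the first sum via \eqref{eq:reco-coherence} with decay $2^{-l(r-\alpha)}$ and the second sum by recognizing its $l$-th term as $\fint_{\abs{z-x}\leq 2^{-k}}G(k+l,z)\dif z$, applying \eqref{eq:N-scaling} then \eqref{eq:reco-coherence} at $l=0$ for decay $2^{-l\gamma}$, and summing via Aoki--Rolewicz. The paper compresses the gluing and measurability issues into the single phrase ``In view of \eqref{eq:N-ae}, it suffices to verify $\calN_{k,x}H(k,x)\lesssim A$,'' relying on the remark after Theorem~\ref{thm:local-reconstruction}, whereas you spell these out explicitly; this is a matter of exposition, not mathematics.
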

A few examples of quasinorms $\calN$ to which Theorem~\ref{thm:reconstruction} can be applied are listed below.
The uniqueness of the reconstruction $f$ in \eqref{eq:reco+1} does not seem to follow from the abstract properties of quasinorms $\calN$ stated above, and will be verified in the examples below separately, cf.\ \cite[Theorem 4.1]{MR4261666}.

We note that, as a consequence of \eqref{eq:fn}, if the distributions $F_{x}$ are in fact continuous functions and the function $(x,y) \mapsto F_{x}(y)$ is locally bounded near the diagonal, then $f$ is given by the function $f(x)=F_{x}(x)$.

\begin{proof}[Proof of Theorem~\ref{thm:reconstruction}]
In view of \eqref{eq:N-ae}, it suffices to verify $\calN_{k,x}H(k,x) \lesssim A$, where $H$ is the function defined in \eqref{eq:local-coherence}.
Since geometric series are summable in quasinormed spaces (this follows e.g.\ from the Aoki--Rolewicz theorem \cite[Lemma 1.1]{MR808777}, which is stated under additional hypotheses there, but in fact only uses quasisubadditivity), it suffices to obtain geometrically decaying bounds for the summands in the two sums over $l$ in \eqref{eq:local-coherence}.
For the first sum, this immediately follows from \eqref{eq:reco-coherence} and the assumption on $r$.
In the second sum, we have
\begin{equation}
\label{eq:g''-sum}
\begin{aligned}
\MoveEqLeft
\calN_{k,x}
\fint_{\abs{z-x}\leq 2^{-k}} \fint_{\abs{y-z}\leq 2^{-k-l}}
\abs{ (F_z-F_y)(\phi^{\dilate{k+l}}_y) } \dif y \dif z
\\ \text{by \eqref{eq:N-scaling}} &\lesssim
2^{-l\gamma} \calN_{k,x}
\fint_{\abs{y-x}\leq 2^{-k}}
\abs{ (F_x-F_y)(\phi^{\dilate{k}}_y) } \dif y
\\ \text{by \eqref{eq:reco-coherence}} &\lesssim
2^{-l\gamma} A.
\qedhere
\end{aligned}
\end{equation}

\end{proof}

\subsection{Besov spaces with $p\in [1,\infty]$}
\label{sec:Besov:p>1}
The prototypical example, previously studied in \cite{arxiv:2106.12528} (in the case $q\geq 1$), are Besov-type quasinorms
\begin{equation}
\label{eq:besov-norm:p>=1}
\calN_{k,x} H(k,x)
=
\ell^{q}_{k} 2^{\gamma k} L^{p}_{x} H(k,x).
\end{equation}
Here and later, we denote $\ell^{q}$ and $L^{p}$ norms by
\[
\ell^{q}_{k} H(k) := \bigl( \sum_{k=0}^{\infty} \abs{H(k)}^{q} \bigr)^{1/q},
\quad
L^{p}_{x} H(x) := \bigl( \int_{\R^{d}} \abs{H(x)}^{p} \dif x \bigr)^{1/p},
\]
when $p,q<\infty$, with the usual modification in the cases $p,q=\infty$.

\begin{lemma}
\label{eq:Besov:p>1:good-for-rec}
Let $q \in (0,\infty]$, $p\in [1,\infty]$, and $\gamma>0$.
Then the quasinorm \eqref{eq:besov-norm:p>=1} satisfies
\eqref{eq:N-scaling} (with same $\gamma$), \eqref{eq:N-monotone-conv}, and \eqref{eq:N-ae}.
\end{lemma}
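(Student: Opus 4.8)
The plan is to verify the three conditions for the Besov quasinorm $\calN_{k,x}H = \ell^q_k 2^{\gamma k} L^p_x H(k,x)$ essentially one at a time, the scaling condition being the only one requiring an actual estimate.

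First I would dispatch \eqref{eq:N-monotone-conv} and \eqref{eq:N-ae}. Both follow from the corresponding properties of the $L^p$ and $\ell^q$ (quasi)norms: if $H_n \nearrow H$ pointwise, then $\abs{H_n(k,x)}^p \nearrow \abs{H(k,x)}^p$, so by the monotone convergence theorem $L^p_x H_n(k,x) \nearrow L^p_x H(k,x)$ for each fixed $k$ (allowing the value $+\infty$); multiplying by the fixed positive weight $2^{\gamma k}$ and applying monotone convergence again in the counting measure in $k$ gives \eqref{eq:N-monotone-conv}. For \eqref{eq:N-ae}: if $\calN_{k,x}H < \infty$ then in particular $2^{\gamma k} L^p_x H(k,x) < \infty$ for every fixed $k$ (here one uses $q>0$, so each term of the $\ell^q$ sum is finite, with the obvious reading when $q=\infty$), hence $H(k,\cdot) < \infty$ for a.e.\ $x$; taking the countable union over $k\in\N$ of the exceptional null sets shows $H < \infty$ a.e.\ on $\N\times\R^d$. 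I should also remark that $\calN$ is a genuine quasinorm (homogeneous, quasisubadditive), which is immediate since $L^p$ and $\ell^q$ are quasinorms for $p\in[1,\infty]$, $q\in(0,\infty]$.

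The substantive point is \eqref{eq:N-scaling}. Fix $l\in\N$ and write $G(k,x) := \fint_{\abs{z-x}\leq 2^{-k}} H(k+l,z)\dif z$. The key observation is that the averaging operator $x \mapsto \fint_{\abs{z-x}\leq 2^{-k}} H(k+l,z)\dif z$ is, pointwise, bounded by the Hardy--Littlewood maximal function of $H(k+l,\cdot)$, or more simply: by Minkowski's integral inequality (valid since $p\geq 1$), $L^p_x G(k,x) \leq \fint_{\abs{h}\leq 2^{-k}} L^p_x H(k+l, x+h)\dif h = L^p_x H(k+l,x)$, using translation invariance of $L^p$ and that the average of a constant is that constant. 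So $2^{\gamma k} L^p_x G(k,x) \leq 2^{-\gamma l} \cdot 2^{\gamma(k+l)} L^p_x H(k+l,x)$. Now apply $\ell^q_k$: since $l$ is fixed, the sequence $k \mapsto 2^{\gamma(k+l)}L^p_x H(k+l,x)$ is just a shift (by $l$) of the sequence $k\mapsto 2^{\gamma k}L^p_x H(k,x)$, so its $\ell^q_{k\geq 0}$ norm is bounded by the $\ell^q_{k\geq 0}$ norm of the original, i.e.\ by $\calN_{k,x}H(k,x)$. Pulling out the constant $2^{-\gamma l}$ gives $\calN_{k,x}G(k,x) \leq 2^{-\gamma l}\calN_{k,x}H(k,x)$, which is \eqref{eq:N-scaling} with the same $\gamma$ (indeed with implied constant $1$).

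The main obstacle — really the only place where the hypothesis $p\geq 1$ is used — is the Minkowski inequality step moving the $L^p_x$ norm inside the spatial average; for $p<1$ this fails, which is exactly why the regime $p<1$ is treated separately later in the paper with extra restrictions. One minor bookkeeping point to handle carefully is the index set: $\calN$ acts on functions on $\N\times\R^d$, so the $\ell^q$ sum runs over $k\geq 0$ and the shift $k\mapsto k+l$ only decreases the norm (it discards the terms $k=0,\dots,l-1$ of the shifted sequence), which is fine for an inequality. The cases $p=\infty$ and $q=\infty$ are handled by the usual $\mathrm{ess\,sup}$/$\sup$ modifications and cause no difficulty.
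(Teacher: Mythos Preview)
Your proof is correct and follows essentially the same approach as the paper: Minkowski's integral inequality (using $p\geq 1$) to push $L^p_x$ inside the spatial average, translation invariance of $L^p$, and then the index shift $k\mapsto k+l$ in $\ell^q$ to extract the factor $2^{-\gamma l}$. Your treatment of \eqref{eq:N-monotone-conv} and \eqref{eq:N-ae} is slightly more explicit than the paper's, which simply appeals to the monotone convergence theorem and calls \eqref{eq:N-ae} immediate.
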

Lemma~\ref{eq:Besov:p>1:good-for-rec} tells that Theorem~\ref{thm:reconstruction} can be applied with quasinorms \eqref{eq:besov-norm:p>=1}.
\begin{proof}
To see the condition \eqref{eq:N-scaling}, write
\begin{align*}
\MoveEqLeft
\calN_{k,x} \fint_{\abs{z-x}\leq 2^{-k}} H(k+l,z) \dif z
\\ &=
\ell^{q}_{k} 2^{\gamma k} L^{p}_{x} \fint_{\abs{h}\leq 2^{-k}} H(k+l,x+h) \dif h
\\ &\leq
\ell^{q}_{k} 2^{\gamma k} \fint_{\abs{h}\leq 2^{-k}} L^{p}_{x} H(k+l,x+h) \dif h
\\ &=
\ell^{q}_{k} 2^{\gamma k} L^{p}_{x} H(k+l,x+h)
\\ &\leq
2^{-\gamma l} \ell^{q}_{k} 2^{\gamma k} L^{p}_{x} H(k,x+h).
\end{align*}
Condition \eqref{eq:N-monotone-conv} follows from the monotone convergence theorem, and \eqref{eq:N-ae} is also immediate.
\end{proof}

\begin{lemma}[Uniqueness]
\label{eq:Besov:p>1:unique-rec}
Let $p,q,\gamma$ be as in Lemma~\ref{eq:Besov:p>1:good-for-rec}.
Then, for any family of distributions $F$, there exists at most one distribution $f$ such that \eqref{eq:reco+1} holds for the quasinorm \eqref{eq:besov-norm:p>=1}.
\end{lemma}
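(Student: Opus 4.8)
The plan is to show that the difference $g := f_{1}-f_{2}$ of any two distributions satisfying \eqref{eq:reco+1} for the quasinorm \eqref{eq:besov-norm:p>=1} must vanish. From the quasisubadditivity of $\calN$ (or simply from Minkowski's inequality for $L^{p}$ together with the quasi-triangle inequality for $\ell^{q}$) one obtains that $g$ itself satisfies a bound of the form \eqref{eq:reco+1}; in particular
\[
\ell^{q}_{k} 2^{\gamma k} L^{p}_{x} \sup_{\xi\in\calB_{r}} \abs{g(\xi_{x}^{\dilate{k}})} < \infty .
\]
Because $\gamma>0$, finiteness of the left-hand side forces $L^{p}_{x} \sup_{\xi\in\calB_{r}} \abs{g(\xi_{x}^{\dilate{k}})} \to 0$ as $k\to\infty$ (when $q<\infty$ the $k$-th summand tends to $0$; when $q=\infty$ the quantity is $O(2^{-\gamma k})$).

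Next I would observe that a fixed rescaling of $\phi$ lies in $\calB_{r}$: since $\phi\in C^{\infty}_{c}(\R^{d})$ is supported in $B(0,1/2)$, its derivative $D^{\tilde r}\phi$ is globally Lipschitz, and as $r-\tilde r\leq 1$ the Hölder condition in \eqref{eq:r-Holder} for $C^{-1}\phi$ follows from this Lipschitz bound when $\abs{x-x'}\leq 1$ and from the boundedness of $D^{\tilde r}\phi$ when $\abs{x-x'}>1$, for a suitable $C=C(\phi,r)$. Hence $\abs{g(\phi_{x}^{\dilate{k}})} \leq C\sup_{\xi\in\calB_{r}}\abs{g(\xi_{x}^{\dilate{k}})}$, so the continuous function $x\mapsto g(\phi_{x}^{\dilate{k}}) = (g*\tilde\phi^{\dilate{k}})(x)$ lies in $L^{p}(\R^{d})$ with $\norm{g*\tilde\phi^{\dilate{k}}}_{L^{p}} \to 0$ as $k\to\infty$. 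Since $L^{p}(\R^{d})$ embeds continuously into $\calD'(\R^{d})$ for $p\in[1,\infty]$, this yields $g*\tilde\phi^{\dilate{k}}\to 0$ in $\calD'(\R^{d})$.

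On the other hand, $\int\phi=1$ and $\phi^{\dilate{k}}$ is supported in $B(0,2^{-k-1})$ with $\norm{\phi^{\dilate{k}}}_{L^{1}}=\norm{\phi}_{L^{1}}$, so $(\phi^{\dilate{k}})_{k}$ is an approximate identity; thus $\phi^{\dilate{k}}*\psi\to\psi$ in $C^{\infty}_{c}(\R^{d})$ for every $\psi$, whence $\langle g*\tilde\phi^{\dilate{k}},\psi\rangle = \langle g,\phi^{\dilate{k}}*\psi\rangle \to \langle g,\psi\rangle$, i.e.\ $g*\tilde\phi^{\dilate{k}}\to g$ in $\calD'(\R^{d})$. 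Comparing the two limits gives $g=0$, that is $f_{1}=f_{2}$.

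The computation is largely routine; the minor technical points are the verification that $C^{-1}\phi\in\calB_{r}$ and the standard convergence of mollifications in $\calD'$. The one substantive observation — and where the hypotheses $\gamma>0$ and $p\geq1$ enter — is that the decay forced by $\gamma>0$ makes $g*\tilde\phi^{\dilate{k}}$ tend to $0$ in $L^{p}$, so that pairing $g$ with a single approximate identity already recovers $g$ and shows it must vanish.
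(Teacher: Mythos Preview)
Your argument is correct and essentially identical to the paper's: both take $\xi\in\calB_{r}$ with $\int\xi=1$ (you use a rescaled $\phi$, the paper an arbitrary such $\xi$), observe that \eqref{eq:reco+1} forces $\norm{(f_{1}-f_{2})*\tilde\xi^{\dilate{k}}}_{L^{p}}\to 0$ because $\gamma>0$, and compare with the distributional convergence $(f_{1}-f_{2})*\tilde\xi^{\dilate{k}}\to f_{1}-f_{2}$. Your write-up simply spells out a few details (the verification $C^{-1}\phi\in\calB_{r}$, the $q<\infty$ versus $q=\infty$ cases) that the paper leaves implicit.
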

Lemma~\ref{eq:Besov:p>1:unique-rec} tells that the reconstruction produced in Theorem~\ref{thm:reconstruction} is unique for the quasinorms \eqref{eq:besov-norm:p>=1}.
\begin{proof}
Let $\xi \in \calB_{r}$ with $\int\xi=1$.
If $f,\tilde{f}$ both satisfy \eqref{eq:reco+1}, then
\[
\calN_{k,x} \abs{ (f-\tilde{f}) * \xi^{\dilate{k}} }(x)
< \infty.
\]
On the other hand,
\begin{equation}
\label{eq:5}
(f-\tilde{f}) * \xi^{\dilate{k}} \stackrel{\calD'}{\to} f-\tilde{f}.
\end{equation}
By definition of $\calN$, the left-hand side of \eqref{eq:5} converges to $0$ in $L^{p}(\R^{d})$, and it follows that $f=\tilde{f}$.
\end{proof}

\subsection{Besov spaces with $p\in (0,1)$}
\label{sec:Besov:p<1}
For $p<1$, the quasinorms \eqref{eq:besov-norm:p>=1} do not satisfy the condition \eqref{eq:N-scaling} (even with $l=0$), since Minkowski's inequality is false in $L^{p}$ in this range of $p$.

Instead, we consider the quasinorm
\begin{equation}
\label{eq:besov-norm:p<1}
\calN_{k,x} H(k,x)
=
\ell^{q}_{k} 2^{\nu k} L^{p}_{x} \fint_{y \in B(x,2^{-k})} H(k,y) \dif y.
\end{equation}
Interestingly, we can no longer consider all $\nu>0$.
In the one-dimensional case $d=1$, the range of allowed $\nu$ below corresponds to the range of the regularity exponents in the Besov space sewing lemma \cite{arxiv:2105.05978}.

\begin{lemma}
\label{lem:Besov:p<1}
Let $q \in (0,\infty]$, $p\in (0,1)$, and $\nu>d(1/p-1)$.
Then the quasinorm \eqref{eq:besov-norm:p<1} satisfies
\eqref{eq:N-scaling} (with $\gamma = \nu - d(1/p-1)$), \eqref{eq:N-monotone-conv}, and \eqref{eq:N-ae}.
\end{lemma}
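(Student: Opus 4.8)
The plan is the following. The two qualitative hypotheses \eqref{eq:N-monotone-conv} and \eqref{eq:N-ae} are routine: the quasinorm \eqref{eq:besov-norm:p<1} is a composition of an $L^{1}$-average in $y$, an $L^{p}$-norm in $x$, and an $\ell^{q}$-norm in $k$, each of which is monotone and obeys the monotone convergence theorem (over Lebesgue, resp.\ counting, measure, with suprema in the cases $p,q=\infty$), so \eqref{eq:N-monotone-conv} follows by composing; and if $\calN_{k,x}H(k,x)<\infty$ then $\fint_{B(x,2^{-k})}H(k,y)\dif y<\infty$ for a.e.\ $x$ and every $k$, whence $H(k,\cdot)\in L^{1}_{\mathrm{loc}}$ and so $H(k,\cdot)<\infty$ a.e., giving \eqref{eq:N-ae}. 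The entire content of the lemma is therefore the scaling bound \eqref{eq:N-scaling}. Unwinding \eqref{eq:besov-norm:p<1}, the left side of \eqref{eq:N-scaling} equals $\ell^{q}_{k}2^{\nu k}L^{p}_{x}\fint_{B(x,2^{-k})}\fint_{B(y,2^{-k})}H(k+l,z)\dif z\dif y$; since $H\geq 0$ and $B(y,2^{-k})\subseteq B(x,2\cdot 2^{-k})$ whenever $y\in B(x,2^{-k})$, the inner double average is at most $2^{d}\fint_{B(x,2\cdot 2^{-k})}H(k+l,z)\dif z$, so it suffices to bound $\ell^{q}_{k}2^{\nu k}L^{p}_{x}\fint_{B(x,2\cdot 2^{-k})}H(k+l,z)\dif z$ by $2^{-l\gamma}\calN_{k,x}H(k,x)$.

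The key ingredient — and the main obstacle — is a \emph{ball-enlargement estimate} valid for $0<p\leq 1$: for any measurable $g\geq 0$ on $\R^{d}$ and any radii $R\geq r>0$,
\[
L^{p}_{x}\fint_{B(x,R)}g(u)\dif u
\lesssim
(R/r)^{d(1/p-1)}L^{p}_{x}\fint_{B(x,r)}g(u)\dif u .
\]
I would prove it by rescaling to $r=1$, covering $B(x,R)$ by $\lesssim R^{d}$ balls $B(x+v,c_{0})$ of a fixed dimensional radius $c_{0}$ with centers at the lattice translates $v\in\Z^{d}\cap B(0,R+c_{0})$, so that $\fint_{B(x,R)}g\lesssim R^{-d}\sum_{v}\fint_{B(x+v,c_{0})}g$ pointwise, and then applying the inequality $\norm{\sum_{v}f_{v}}_{L^{p}}^{p}\leq\sum_{v}\norm{f_{v}}_{L^{p}}^{p}$ (valid for $p\leq 1$) together with the translation invariance of $L^{p}$: this turns the $\lesssim R^{d}$ translated averages into $\lesssim R^{d}$ copies of $\norm{\fint_{B(\cdot,c_{0})}g}_{L^{p}}$, and collecting the powers $R^{-dp}\cdot R^{d}=R^{d(1-p)}$ and taking $p$-th roots yields the claim (noting $\norm{\fint_{B(\cdot,c_{0})}g}_{L^{p}}$ is comparable to $\norm{\fint_{B(\cdot,1)}g}_{L^{p}}$ since $c_{0}$ is a constant). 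This is precisely where $p<1$ is used — the $p$-triangle inequality replaces the Minkowski inequality invoked for $p\geq 1$ in Lemma~\ref{eq:Besov:p>1:good-for-rec}, which fails here — and the exponent $d(1/p-1)$ that it costs is genuine (it is sharp when $g$ concentrates near a point), which is exactly the origin of the restriction $\nu>d(1/p-1)$.

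It then remains to combine the gains. Applying the ball-enlargement estimate with $R=2\cdot 2^{-k}$ and $r=2^{-(k+l)}$ (so $R/r=2^{l+1}$) gives
\[
L^{p}_{x}\fint_{B(x,2\cdot 2^{-k})}H(k+l,z)\dif z
\lesssim
2^{ld(1/p-1)}L^{p}_{x}\fint_{B(x,2^{-(k+l)})}H(k+l,z)\dif z ;
\]
and, exactly as in the last line of the proof of Lemma~\ref{eq:Besov:p>1:good-for-rec}, the index shift $k\mapsto k+l$ inside $\ell^{q}_{k}2^{\nu k}$ costs a factor $2^{-\nu l}$, since the $\ell^{q}$-norm over the shifted range $\{k+l:k\geq 0\}\subseteq\N$ is at most that over all of $\N$; that is, $\ell^{q}_{k}2^{\nu k}L^{p}_{x}\fint_{B(x,2^{-(k+l)})}H(k+l,z)\dif z\leq 2^{-\nu l}\calN_{k,x}H(k,x)$. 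Multiplying the two bounds produces the factor $2^{ld(1/p-1)}\cdot 2^{-\nu l}=2^{-l\gamma}$ with $\gamma=\nu-d(1/p-1)$, which is \eqref{eq:N-scaling}, finishing the proof.
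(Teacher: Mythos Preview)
Your proof is correct and follows essentially the same approach as the paper: both reduce the double average to a single average over the enlarged ball $B(x,2\cdot 2^{-k})$, then cover this ball by $O(2^{dl})$ balls of radius $2^{-(k+l)}$ and use the $p$-triangle inequality together with translation invariance of $L^{p}$ to pay the factor $2^{ld(1/p-1)}$, finally absorbing the index shift $k\mapsto k+l$ in the $\ell^{q}$-norm at cost $2^{-\nu l}$. The only cosmetic differences are that you package the covering step as a separate ``ball-enlargement'' lemma and perform the enlargement before the index shift, whereas the paper shifts first and then covers; the arithmetic is identical.
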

\begin{proof}
Conditions \eqref{eq:N-monotone-conv} and \eqref{eq:N-ae} are easy to see.
Let us now show that \eqref{eq:N-scaling} holds with the claimed value of $\gamma$.
Indeed,
\begin{align*}
\calN_{k,x} \fint_{\abs{z-x} \leq 2^{-k}} H(k+l,z) \dif z
&\lesssim
\ell^{q}_{k} 2^{\nu k} L^{p}_{x} \fint_{\abs{z-x} \leq 2^{-k+1}} H(k+l,y) \dif y
\\ &=
\ell^{q}_{k\geq l} 2^{\nu (k-l)} L^{p}_{x} \fint_{\abs{y} \leq 2^{-k+l+1}} H(k,x+y) \dif y.
\end{align*}
We can cover the ball $B(0,2^{-k+l+1}) \subset \R^{d}$ by $N_{l} = O_{d}(2^{dl})$ many $2^{-k}$-balls $B_{1},\dotsc,B_{N_{l}}$.
Hence,
\begin{align*}
L^{p}_{x} \fint_{\abs{y} \leq 2^{-k+l+1}} H(k,x+y) \dif y
&\lesssim
N_{l}^{-1} L^{p}_{x} \sum_{j=1}^{N_{l}} \fint_{y \in B_{j}} H(k,x+y) \dif y
\\ &=
N_{l}^{-1} \Bigl( \int_{\R^{d}} \abs[\big]{ \sum_{j=1}^{N_{l}} \fint_{y \in B_{j}} H(k,x+y) \dif y }^{p} \dif x \Bigr)^{1/p}
\\ &\leq
N_{l}^{-1} \Bigl( \int_{\R^{d}} \sum_{j=1}^{N_{l}} \abs[\big]{ \fint_{y \in B_{j}} H(k,x+y) \dif y }^{p} \dif x \Bigr)^{1/p}
\\ &=
N_{l}^{-1} \Bigl( \sum_{j=1}^{N_{l}} \int_{\R^{d}} \abs[\big]{ \fint_{y \in B_{j}} H(k,x+y) \dif y }^{p} \dif x \Bigr)^{1/p}
\\ &=
N_{l}^{1/p-1} L^{p}_{x} \fint_{\abs{y} \leq 2^{-k}} H(k,x+y) \dif y.
\end{align*}
Hence, we see that
\[
\calN_{k,x} \fint_{\abs{z-x} \leq 2^{-k}} H(k+l,z) \dif z
\lesssim
2^{-\nu l} N_{l}^{1/p-1} \calN_{k,x} H(k,x)
\lesssim
2^{-\gamma l} \calN_{k,x} H(k,x).
\qedhere
\]
\end{proof}

\begin{lemma}[Uniqueness]
\label{eq:Besov:p<1:unique-rec}
Let $p,q,\gamma,\nu$ be as in Lemma~\ref{lem:Besov:p<1}.
Then, for any family of distributions $F$, there exists at most one distribution $f$ such that \eqref{eq:reco+1} holds for the quasinorm \eqref{eq:besov-norm:p<1}.
\end{lemma}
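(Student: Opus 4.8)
The plan is to follow the proof of Lemma~\ref{eq:Besov:p>1:unique-rec}, adding the one ingredient needed to accommodate $p<1$. Suppose $f,\tilde{f}\in\calD'(\R^{d})$ both satisfy \eqref{eq:reco+1} for the quasinorm \eqref{eq:besov-norm:p<1}, and put $g:=f-\tilde{f}$. Fix $\xi\in C^{\infty}_{c}(\R^{d})$ with $\supp\xi\subseteq B(0,1/2)$ and $\int\xi=1$, and choose $\lambda\in(0,1]$ with $\lambda\xi\in\calB_{r}$. With $h_{k}:=g*\tilde{\xi}^{\dilate{k}}$, so that $h_{k}(x)=g(\xi^{\dilate{k}}_{x})$, I would first combine \eqref{eq:reco+1} for $f$ and for $\tilde{f}$ with homogeneity, quasisubadditivity, and the (evident) monotonicity of the quasinorm \eqref{eq:besov-norm:p<1} to get $\calN_{k,x}\abs{h_{k}(x)}\lesssim A<\infty$. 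Unwinding the definition of $\calN$, this forces $2^{\nu k}\norm{u_{k}}_{L^{p}(\R^{d})}$ to stay bounded as $k\to\infty$, where $u_{k}(x):=\fint_{y\in B(x,2^{-k})}\abs{h_{k}(y)}\dif y$. Since $\int\tilde{\xi}=\int\xi=1$, a standard mollification argument gives $h_{k}\to g$ in $\calD'(\R^{d})$, so it will suffice to prove that $h_{k}\to 0$ in $\calD'(\R^{d})$.

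For this, I would fix $\psi\in C^{\infty}_{c}(\R^{d})$ supported in a ball $B(0,R)$, cover $B(0,R)$ by balls $B_{i}=B(x_{i},2^{-k-1})$, $i=1,\dots,N$, with overlap bounded in terms of $d$ alone, and estimate $\abs{\int h_{k}\psi}\leq\norm{\psi}_{\infty}\sum_{i}\int_{B_{i}}\abs{h_{k}}$. The point is that $B_{i}\subseteq B(x,2^{-k})$ for every $x\in B_{i}$, so $\fint_{B_{i}}\abs{h_{k}}\leq 2^{d}u_{k}(x)$; raising this to the power $p$ and averaging over $x\in B_{i}$ turns it into $\fint_{B_{i}}\abs{h_{k}}\leq 2^{d}(\fint_{B_{i}}u_{k}^{p})^{1/p}$. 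Summing over $i$, using $\abs{B_{i}}=\abs{B(0,2^{-k-1})}\approx 2^{-dk}$, the bounded overlap, and the inequality $\sum_{i}a_{i}\leq(\sum_{i}a_{i}^{p})^{1/p}$ valid for $p\leq 1$ and $a_{i}\geq 0$, I would obtain $\sum_{i}\int_{B_{i}}\abs{h_{k}}\lesssim 2^{dk(1/p-1)}\norm{u_{k}}_{L^{p}(\R^{d})}$ with constant depending only on $d$. Hence $\abs{\int h_{k}\psi}\lesssim\norm{\psi}_{\infty}\cdot 2^{k(d(1/p-1)-\nu)}\cdot\bigl(2^{\nu k}\norm{u_{k}}_{L^{p}(\R^{d})}\bigr)$, and since $\nu>d(1/p-1)$ while the last factor stays bounded, this tends to $0$. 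So $h_{k}\to 0$ in $\calD'$, which together with $h_{k}\to g$ yields $g=0$, i.e.\ $f=\tilde{f}$.

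The step that genuinely differs from the case $p\geq 1$, and which I expect to be the crux, is the local bound $\int_{B(0,R)}\abs{h_{k}}\lesssim 2^{dk(1/p-1)}\norm{u_{k}}_{L^{p}}$. For $p\geq 1$ one bounds $\int_{B(0,R)}\abs{h_{k}}$ by $\norm{u_{k}}_{L^{p}}$ up to a constant for free, via Hölder's inequality or the local embedding $L^{p}\hookrightarrow L^{1}$; for $p<1$ this embedding fails, and the dyadic covering produces the factor $2^{dk(1/p-1)}$ which is then compensated precisely by the scaling gain in $\calN$. This is exactly where $\nu>d(1/p-1)$ is used, consistently with its role in Lemma~\ref{lem:Besov:p<1}. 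The remaining points are routine: one must take the covering balls of radius $2^{-k-1}$, strictly smaller than the averaging scale $2^{-k}$ of $u_{k}$, so that $B_{i}\subseteq B(x,2^{-k})$ for $x\in B_{i}$, and one must record that \eqref{eq:besov-norm:p<1} is monotone and quasisubadditive so that the reduction to $\calN_{k,x}\abs{h_{k}(x)}<\infty$ in the first paragraph is legitimate.
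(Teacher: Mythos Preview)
Your proof is correct and follows essentially the same approach as the paper. The paper's version is slightly more streamlined: it derives the global bound $\norm{h_{k}}_{L^{1}(\R^{d})}\lesssim 2^{dk(1/p-1)}\norm{u_{k}}_{L^{p}(\R^{d})}$ in one line (citing the elementary inequality $\ell^{1}\leq\ell^{p}$ and noting this as an instance of the reverse Young inequality) and concludes $h_{k}\to 0$ in $L^{1}(\R^{d})$, whereas you localize to a fixed compact set and conclude $h_{k}\to 0$ in $\calD'$; but the covering argument and the use of $\sum a_{i}\leq(\sum a_{i}^{p})^{1/p}$ are identical in both.
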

Lemma~\ref{eq:Besov:p<1:unique-rec} tells that the reconstruction produced in Theorem~\ref{thm:reconstruction} is unique in the cases covered by Lemma~\ref{lem:Besov:p<1}.
\begin{proof}
Let $\xi \in \calB_{r}$ with $\int\xi=1$.
If $f,\tilde{f}$ both satisfy \eqref{eq:reco+1}, then
\[
\calN_{k,x} \abs{ (f-\tilde{f}) * \xi^{\dilate{k}} }(x)
< \infty.
\]
As a consequence of the fact that the $\ell^{1}$ norm of a sequence is bounded by its $\ell^{p}$ norm, we obtain
\[
L^{1}_{x} \abs{ (f-\tilde{f}) * \xi^{\dilate{k}} }
\lesssim
2^{kd(1/p-1)} L^{p}_{x} \fint_{B(x,2^{-k})} \abs{ (f-\tilde{f}) * \xi^{\dilate{k}} }
\lesssim
2^{kd(1/p-1)} 2^{-\nu k}.
\]
Incidentally, this simple estimate is a special case of the reverse Young inequality, see \cite{MR2199372,MR1616143}.

The above estimate shows in particular that $\lim_{k\to\infty}(f-\tilde{f}) * \xi^{\dilate{k}} = 0$ in $L^{1}(\R^{d})$.
On the other hand, this sequence converges to $f-\tilde{f}$ in the sense of distributions, so that $f=\tilde{f}$.
\end{proof}

\subsection{Triebel--Lizorkin spaces}
The purpose of this section is to illustrate Theorem~\ref{thm:reconstruction} with an example of a function space norm that is not of Besov type.
One of the most natural quasinorms that one might consider is
\begin{equation}
\label{eq:triebel-lizorkin-norm}
\calN_{k,x} H(k,x) =
L^{p}_{x} \ell^{q}_{k} 2^{\gamma k} \bigl( \fint_{\abs{y} \leq 2^{-k}} H(k,x+y)^{q} \dif y \bigr)^{1/q}.
\end{equation}
A similar norm without the average over $y$ would be even simpler to look at.
\begin{lemma}
\label{lem:Triebel-Lizorkin}
Let $p \in (1,\infty)$, $q\in (1,\infty]$, and $\gamma >0$.
Then, the quasinorm \eqref{eq:triebel-lizorkin-norm} satisfies
\eqref{eq:N-scaling}, \eqref{eq:N-monotone-conv}, and \eqref{eq:N-ae}.
\end{lemma}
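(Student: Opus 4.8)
The plan is to verify the three conditions \eqref{eq:N-scaling}, \eqref{eq:N-monotone-conv}, \eqref{eq:N-ae} separately; only the scaling condition \eqref{eq:N-scaling} is substantial, and for it the main tool is the Fefferman--Stein vector-valued maximal inequality, which holds precisely in the exponent range $p\in(1,\infty)$, $q\in(1,\infty]$ assumed here.

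The qualitative conditions are routine. Condition \eqref{eq:N-monotone-conv} follows by applying the monotone convergence theorem from the inside out: first to the inner $y$-average of $H_{n}(k,\cdot)^{q}$, then (for $q<\infty$, composing with the continuous increasing map $t\mapsto t^{1/q}$) to the series $\ell^{q}_{k}$, and finally to the outer integral $L^{p}_{x}$; the case $q=\infty$ is the same with suprema in place of sums. For \eqref{eq:N-ae}, if $\calN_{k,x}H(k,x)<\infty$ then for a.e.\ $x$ and every $k$ one has $\fint_{\abs{y}\leq 2^{-k}}H(k,x+y)^{q}\dif y<\infty$, hence $H(k,\cdot)<\infty$ a.e.\ on $B(x,2^{-k})$; fixing $k$ and integrating $\one_{\{H(k,\cdot)=\infty\}}$ over $x$ against the indicator of a $2^{-k}$-ball, Fubini shows $\{H(k,\cdot)=\infty\}$ is null, so $H<\infty$ a.e.

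For \eqref{eq:N-scaling}, abbreviate $\tilde H(m,x):=\bigl(\fint_{\abs{y}\leq 2^{-m}}H(m,x+y)^{q}\dif y\bigr)^{1/q}$ and $g_{m}(x):=2^{\gamma m}\tilde H(m,x)$, so that $\calN_{k,x}H(k,x)=\norm{(g_{m})_{m}}_{L^{p}(\ell^{q})}$. Apply $\calN$ to $G_{l}(k,x):=\fint_{\abs{z-x}\leq 2^{-k}}H(k+l,z)\dif z$. The extra $y$-average over $B(0,2^{-k})$ built into $\calN$ is harmless, because for $\abs{y}\leq 2^{-k}$ the ball $\{\abs{z-x-y}\leq 2^{-k}\}$ lies in $\{\abs{z-x}\leq 2^{-k+1}\}$, so $G_{l}(k,x+y)\leq 2^{d}\fint_{\abs{z-x}\leq 2^{-k+1}}H(k+l,z)\dif z$, a bound independent of $y$. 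Re-indexing $m=k+l$ and extracting the factor $2^{-\gamma l}$ then reduces everything to the pointwise estimate
\[
\fint_{\abs{z-x}\leq 2^{-m+l+1}}H(m,z)\dif z
\lesssim
M\tilde H(m,\cdot)(x),
\]
where $M$ is the Hardy--Littlewood maximal operator and the implied constant is independent of $l$ and $m$. This estimate I would obtain from $\tilde H(m,w)\geq\fint_{\abs{y}\leq 2^{-m}}H(m,w+y)\dif y$ (Jensen, using $q\geq 1$) by averaging over $w\in B(x,2^{-m+l+2})$, using Fubini to rewrite the $w$-average as an average of $H(m,\cdot)$ over $B(x+y,2^{-m+l+2})\supseteq B(x,2^{-m+l+1})$, and comparing the two ball volumes (ratio $2^{d}$). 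Together with the positive homogeneity of $M$ and the Fefferman--Stein inequality $\norm{(Mg_{m})_{m}}_{L^{p}(\ell^{q})}\lesssim\norm{(g_{m})_{m}}_{L^{p}(\ell^{q})}$, this yields \eqref{eq:N-scaling} with the same $\gamma$.

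The point requiring care is this pointwise estimate, with a constant \emph{uniform in $l$}: a ball average of the arbitrary nonnegative function $H(m,\cdot)$ over the large ball of radius $2^{-m+l+1}$ must be controlled by the maximal function of the local $2^{-m}$-average $\tilde H(m,\cdot)$, and this succeeds only because the norm \eqref{eq:triebel-lizorkin-norm} contains that inner $y$-average. One should also record where the exponent restrictions enter: $q>1$ through the Fefferman--Stein inequality (which fails at $q=1$), and $p>1$ through the boundedness of $M$ on $L^{p}$; for $q=\infty$ the vector-valued inequality reduces to the scalar one via $\sup_{m}Mg_{m}\leq M(\sup_{m}g_{m})$.
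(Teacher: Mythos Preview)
Your proof is correct and follows essentially the same route as the paper: absorb the built-in $y$-average into a larger ball, relate the resulting large-ball average of $H(k+l,\cdot)$ to $\tilde H(k+l,\cdot)$ via Jensen (using $q\geq 1$), bound by the Hardy--Littlewood maximal function, and apply the Fefferman--Stein vector-valued inequality to pass back to $\calN H$. The only cosmetic difference is the order of operations: the paper first re-inserts a small-scale $y$-average of $H$ and then applies Jensen upward to produce $\tilde H(k+l,x+z)$ inside a $z$-average, whereas you start from Jensen on $\tilde H$ and average over a larger $w$-ball to dominate the $H$-average; both arrive at $\fint_{\abs{z-x}\leq 2^{-k+1}}H(k+l,z)\dif z \lesssim M\tilde H(k+l,\cdot)(x)$.

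One small expository point: your remark that the pointwise estimate ``succeeds only because the norm \eqref{eq:triebel-lizorkin-norm} contains that inner $y$-average'' is misleading. The paper itself notes that the norm without the inner average ``would be even simpler to look at'', and indeed for that simpler norm one has $\fint_{\abs{z-x}\leq 2^{-k+1}}H(k+l,z)\dif z \leq M H(k+l,\cdot)(x)$ directly, so \eqref{eq:N-scaling} again follows from Fefferman--Stein. The inner average is an additional feature to be accommodated, not the mechanism that makes the argument work; the essential ingredient is the Fefferman--Stein inequality, which is what forces the restrictions $p\in(1,\infty)$, $q\in(1,\infty]$.
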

\begin{proof}
Conditions \eqref{eq:N-monotone-conv} and \eqref{eq:N-ae} are easy to see.
It remains to show \eqref{eq:N-scaling}.
We begin with the estimate
\begin{align*}
\MoveEqLeft
\calN_{k,x} \fint_{\abs{z-x} \leq 2^{-k}} H(k+l,z) \dif z
\\ &=
L^{p}_{x} \ell^{q}_{k} 2^{\gamma k} \Bigl( \fint_{\abs{y} \leq 2^{-k}} \bigl( \fint_{\abs{z} \leq 2^{-k}} H(k+l,x+y+z) \bigr)^{q} \dif y \Bigr)^{1/q}
\\ &\lesssim
L^{p}_{x} \ell^{q}_{k} 2^{\gamma k} \Bigl( \fint_{\abs{z} \leq 2^{-k+1}} H(k+l,x+z) \dif z \Bigr)
\\ &\lesssim
L^{p}_{x} \ell^{q}_{k} 2^{\gamma k} \Bigl( \fint_{\abs{z} \leq 2^{-k+1}} \bigl( \fint_{\abs{y} \leq 2^{-k-l}} H(k+l,x+y+z) \dif y \bigr) \dif z \Bigr)
\\ &\leq
L^{p}_{x} \ell^{q}_{k} 2^{\gamma k} \Bigl( \fint_{\abs{z} \leq 2^{-k+1}} \bigl( \fint_{\abs{y} \leq 2^{-k-l}} H(k+l,x+y+z)^{q} \dif y \bigr)^{1/q} \dif z \Bigr),
\end{align*}
with the usual modification if $q=\infty$.
By the Fefferman--Stein maximal inequality, see e.g.\ \cite[Theorem 3.2.28]{MR3617205}, this is bounded by
\[
L^{p}_{x} \ell^{q}_{k} 2^{\gamma k} \bigl( \fint_{\abs{y} \leq 2^{-k-l}} H(k+l,x+y)^{q} \dif y \bigr)^{1/q}
\leq
2^{-\gamma l} L^{p}_{x} \ell^{q}_{k} 2^{\gamma k} \bigl( \fint_{\abs{y} \leq 2^{-k}} H(k,x+y)^{q} \dif y \bigr)^{1/q}.
\qedhere
\]
\end{proof}

\begin{lemma}[Uniqueness]
\label{lem:Triebel-Lizorkin:unique-rec}
Let $p,q,\gamma$ be as in Lemma~\ref{lem:Triebel-Lizorkin}.
Then, for any family of distributions $F$, there exists at most one distribution $f$ such that \eqref{eq:reco+1} holds for the quasinorm \eqref{eq:triebel-lizorkin-norm}.
\end{lemma}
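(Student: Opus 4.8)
The plan is to mirror the proofs of Lemmas~\ref{eq:Besov:p>1:unique-rec} and~\ref{eq:Besov:p<1:unique-rec}. Fix a test function $\xi\in\calB_{r}$ with $\int\xi=1$ and set $g_{k}:=(f-\tilde f)*\xi^{\dilate{k}}$. If $f$ and $\tilde f$ both satisfy \eqref{eq:reco+1} for the quasinorm \eqref{eq:triebel-lizorkin-norm}, then, exactly as in those two lemmas, $\calN_{k,x}\abs{g_{k}(x)}<\infty$; moreover $g_{k}\to f-\tilde f$ in $\calD'(\R^{d})$ as $k\to\infty$, since $\xi^{\dilate{k}}$ is an approximate identity. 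It therefore suffices to prove that $\calN_{k,x}\abs{g_{k}(x)}<\infty$ forces $g_{k}\to0$ in $\calD'(\R^{d})$, for then $f=\tilde f$ by uniqueness of distributional limits.

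The core of the argument is a single estimate converting finiteness of the Triebel--Lizorkin quasinorm \eqref{eq:triebel-lizorkin-norm} of $g_{k}$ into decay of $g_{k}$ in $L^{1}_{\mathrm{loc}}$ as $k\to\infty$. Fix a compact $K\subseteq\R^{d}$ and put $K':=K+\overline{B(0,1)}$. For $k$ with $2^{-k}\le1$ and $y\in K$ the ball $B(y,2^{-k})$ lies in $K'$, so, by Fubini, then Jensen's inequality in the inner average (valid since $q\ge1$), then Hölder's inequality on the compact set $K'$ (valid since $p>1$),
\[
\int_{K}\abs{g_{k}(y)}\dif y
\le\int_{K'}\fint_{\abs{y}\le2^{-k}}\abs{g_{k}(x+y)}\dif y\,\dif x
\le\abs{K'}^{1-1/p}\,2^{-\gamma k}\,\norm[\big]{\,2^{\gamma k}\bigl(\fint_{\abs{y}\le2^{-k}}\abs{g_{k}(\cdot+y)}^{q}\dif y\bigr)^{1/q}\,}_{L^{p}(\R^{d})},
\]
with the usual modification when $q=\infty$. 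The $L^{p}$ norm on the right is the $k$-th term of the $\ell^{q}_{k}$ sum (resp.\ of the outer $\sup_{k}$ when $q=\infty$) appearing in $\calN_{k,x}\abs{g_{k}(x)}$, hence is bounded by the fixed finite number $\calN_{k,x}\abs{g_{k}(x)}$. Since $\gamma>0$, the right-hand side tends to $0$ as $k\to\infty$, so $\int_{K}\abs{g_{k}}\to0$ for every compact $K$, i.e.\ $g_{k}\to0$ in $L^{1}_{\mathrm{loc}}(\R^{d})$ and a fortiori in $\calD'(\R^{d})$.

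I do not anticipate a genuine obstacle; this argument is shorter than in the Besov cases because the displayed estimate already yields an explicit geometric rate. The only steps needing care are the two simple reductions built into that estimate: that enlarging $K$ to $K'$ makes the Fubini step exact (for $2^{-k}\le1$ and $y\in K$, integrating $x$ over $K'$ recovers the full mass of $B(y,2^{-k})$), and that a single $\ell^{q}_{k}$-layer of the quasinorm \eqref{eq:triebel-lizorkin-norm} is dominated by the whole quasinorm; the hypotheses $p>1$ and $q\ge1$ of Lemma~\ref{lem:Triebel-Lizorkin} are precisely what the Hölder and Jensen steps use. An alternative, closer in spirit to an almost-everywhere argument, would instead observe that $\calN_{k,x}\abs{g_{k}(x)}<\infty$ forces $\fint_{B(x,2^{-k})}\abs{g_{k}}\to0$ for a.e.\ $x$ with an $L^{1}_{\mathrm{loc}}$ majorant, and apply dominated convergence on $K'$; but the one-shot estimate above is cleaner.
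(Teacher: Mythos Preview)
Your proof is correct and follows the same route as the paper's: extract from finiteness of \eqref{eq:triebel-lizorkin-norm} that the $L^{p}$ norm of the $k$-th layer is $O(2^{-\gamma k})$, then deduce $g_{k}\to 0$ in $L^{1}_{\mathrm{loc}}$ (you spell out the Fubini--Jensen--H\"older chain that the paper leaves implicit). One wording nit: in \eqref{eq:triebel-lizorkin-norm} the $\ell^{q}_{k}$ sits \emph{inside} the $L^{p}_{x}$, not outside, so the step ``the $L^{p}$ norm on the right is the $k$-th term of the $\ell^{q}_{k}$ sum\ldots hence bounded by $\calN$'' is really the pointwise inequality $a_{k}(x)\le\norm{a_{\cdot}(x)}_{\ell^{q}}$ followed by $L^{p}$---same conclusion, but your phrase ``outer $\sup_{k}$'' has the order reversed.
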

\begin{proof}
If $f,\tilde{f}$ are two distributions for which \eqref{eq:triebel-lizorkin-norm} holds and $\xi \in \calB_{r}$ with $\int \xi = 1$, then
\[
\lim_{k\to\infty} L^{p}_{x} \bigl( \fint_{\abs{y} \leq 2^{-k}} \abs{(f-\tilde{f})*\xi^{\dilate{k}}}^{q} \dif y \bigr)^{1/q}
= 0.
\]
This implies in particular that $(f-\tilde{f})*\xi^{\dilate{k}} \to 0$ in $L^{1}(\Omega)$ for any bounded measurable set $\Omega \subset \R^{d}$, so that $f=\tilde{f}$.
\end{proof}

\section{Proof of Theorem~\ref{thm:local-reconstruction}}
\label{sec:proof}
The entirety of this section is occupied by the proof of Theorem~\ref{thm:local-reconstruction}.

\subsection{Approximating functions}
In this subsection, we recall the construction of mollifiers and approximating functions in \cite{MR4261666}.

By \cite[Lemma 8.1]{MR4261666}, there exists a linear combination of dilated functions $\phi^{(0)},\dotsc,\phi^{(\tilde{r})}$ that has integral $1$ and vanishing moments of orders $1,\dotsc,\tilde{r}$.
Replacing $\phi$ by this linear combination in \eqref{eq:local-coherence} increases the value of $H$ at most by a multiplicative constant (depending on $r$), so we may assume that
\[
\int_{\R^d} \phi(y) \dif y = 1
\quad\text{and}\quad
\int_{\R^d} y^\alpha \phi(y) \dif y = 0
\quad \text{for } 1 \le \abs{\alpha} \le \tilde{r}.
\]
We use the mollifiers $\rho := \phi^{\dilate{1}} * \phi$, so that
\begin{equation} \label{eq:diffconv}
\rho^{\dilate{1}} - \rho = \phi^{(1)} * \psi,
\qquad \text{where} \qquad \psi := \phi^{\dilate{2}} - \phi.
\end{equation}
Note that $\int\rho=\int \phi^{\dilate{1}} \int \phi=1$ and
\begin{equation} \label{eq:checkvarphi2}
\rho^{\dilate{n+1}} - \rho^{\dilate{n}}
= (\rho^{\dilate{1}} - \rho)^{\dilate{n}}
= \phi^{\dilate{n+1}} * \psi^{\dilate{n}} .
\end{equation}
This will be used to compare convolutions with $\rho^{\dilate{n+1}}$ and $\rho^{\dilate{n}}$.

The mollifiers $\rho$ are used to define the functions \eqref{eq:fn}.
The integrability of the functions \eqref{eq:fn} will follow from the bound \eqref{eq:1} below and smoothness of $z \mapsto F_{x}(\rho^{\dilate{n}}_{z})$.

A novelty of our presentation with respect to \cite{MR4261666} is that we deduce both the convergence of the sequence \eqref{eq:fn} and the estimate \eqref{eq:reco+1} from the same set of estimates summarized in \eqref{eq:telescoping-bound}.

\subsection{Telescoping sum}
We will write the sequence
\begin{equation}\label{neq:tildefn}
f_{x,n}(z):=
(f_{n} - F_{x}*\tilde{\rho}^{\dilate{n}})(z)
=
(F_z - F_x)(\rho^{\dilate{n}}_z)
\end{equation}
as a telescoping sum.
The difference of consecutive functions in the sequence \eqref{neq:tildefn} is
\begin{align*}
f_{x,n+1}(z) - f_{x,n}(z)
&=
(F_z-F_{x})(\rho^{\dilate{n+1}}_z - \rho^{\dilate{n}}_z)
\\ &=
\int_{\R^d} (F_{z}-F_{x})(\phi^{\dilate{n+1}}_y) \psi^{\dilate{n}}_z(y) \dif y
\\ &=
\int_{\R^d} (F_y-F_{x})(\phi^{\dilate{n+1}}_y) \psi^{\dilate{n}}(y-z) \dif y
\\ &\quad+
\int_{\R^d} (F_z-F_{y})(\phi^{\dilate{n+1}}_y) \psi^{\dilate{n}}(y-z) \dif y
\\ &=:
g_{x,n}'(z) + g_{n}''(z).
\end{align*}
In Section~\ref{sec:estimates}, we will show
\begin{equation}
\label{eq:telescoping-bound}
\sup_{\xi\in\calB_r} \abs{f_{x,k}(\xi_x^{\dilate{k}})}
+ \sum_{l\geq 0} \abs{g_{x,k+l}'(\xi_x^{\dilate{k}})}
+ \sum_{l\geq 0} \abs{g_{k+l}''(\xi_x^{\dilate{k}})}
\lesssim H(k,x).
\end{equation}
Since also $\lim_{n\to\infty} F_{x} * \tilde{\rho}^{\dilate{n}} = F_{x}$ in the sense of distributions, it follows that the limit
\begin{align*}
\xi \mapsto f(\xi_{x}^{\dilate{k}})
&:=
\lim_{n\to \infty} f_{n}(\xi_x^{\dilate{k}})
\\ &=
\lim_{n\to \infty} (f_{x,n} + F_{x}* \tilde{\rho}^{\dilate{n}})(\xi_x^{\dilate{k}})
\\ &=
\lim_{n\to \infty} (f_{x,k} + \sum_{l=0}^{n-k-1} (f_{x,k+l+1}-f_{x,k+l}))(\xi_x^{\dilate{k}}) + F_{x}(\xi_x^{\dilate{k}})
\\ &=
f_{x,k}(\xi_x^{\dilate{k}})
+ \sum_{l\geq 0} g_{x,k+l}'(\xi_x^{\dilate{k}})
+ \sum_{l\geq 0} g_{k+l}''(\xi_x^{\dilate{k}})
+ F_{x}(\xi_x^{\dilate{k}})
\end{align*}
exists and is bounded by \eqref{eq:telescoping-bound} for $\xi \in \calB_{r}$.

\subsection{Estimates} \label{sec:estimates}
It remains to show \eqref{eq:telescoping-bound}.
Let $\xi\in\calB_r$ be arbitrary.
We start with the sums over $l$, because the first term in \eqref{eq:telescoping-bound} will be estimated by quantities that appear in the bounds for these sums.

\begin{proof}[Estimate for $g'$]
By definition,
\begin{equation*}
\begin{split}
g_{x,k+l}'(\xi_{x}^{\dilate{k}})
&=
\int_{\R^d} \int_{\R^d} (F_y - F_x)(\phi^{\dilate{k+l+1}}_y) \psi^{\dilate{k+l}}(y-z) \xi_x^{\dilate{k}}(z) \dif y \dif z
\\ & =
\int_{\R^d} (F_y - F_x)(\phi^{\dilate{k+l+1}}_y) (\psi^{\dilate{k+l}} * \xi_x^{\dilate{k}})(y) \dif y.
\end{split}
\end{equation*}
As in \cite[Lemma 9.2]{MR4261666}, using Taylor's formula of order $\tilde{r}$ for $\xi$, we see that $\abs{\psi^{\dilate{k+l}} * \xi_x^{\dilate{k}}} \lesssim 2^{-lr+kd}$.
Moreover, this convolution is supported on $B(0,2^{-k})$.
It follows that
\begin{equation} \label{eq:g'-pointwise}
\abs{g_{x,k+l}'(\xi_{x}^{\dilate{k}})}
\lesssim 2^{-lr}
\fint_{\abs{y-x} \leq 2^{-k}} \abs{ (F_y - F_x)(\phi^{\dilate{k+l+1}}_y) } \dif y.
\end{equation}
This is the $(l+1)$-th summand in the first sum in \eqref{eq:local-coherence}.
\end{proof}

\begin{proof}[Estimate for $g''$]
Using the support conditions on $\psi$ and $\xi$, we obtain
\begin{equation} \label{eq:g''-pointwise}
\abs{g''_{k+l}(\xi_x^{\dilate{k}})}
\lesssim
\fint_{\abs{z-x}\leq 2^{-k-1}} \fint_{\abs{y-z}\leq 2^{-k-l-1}}
\abs{ (F_z-F_y)(\phi^{\dilate{k+l+1}}_y) } \dif y \dif z.
\end{equation}
This is bounded by the $(l+1)$-th summand in the second sum in \eqref{eq:local-coherence}.
\end{proof}

\begin{proof}[Estimate for $f_{x,k}$]
By the support condition on $\xi$, we have
\[
\abs{f_{x,k}(\xi_x^{\dilate{k}})}
\lesssim
\fint_{z\in B(x,2^{-k-1})} \abs{ f_{x,k}(z) } \dif z.
\]
For any $l\geq 0$, we have
\begin{equation}
\label{eq:1}
\begin{aligned}
\MoveEqLeft
\fint_{\abs{z-x}\leq 2^{-k-1}} \abs{ f_{x,k+l}(z) } \dif z
\\ &=
\fint_{\abs{z-x}\leq 2^{-k-1}} \abs[\Big]{ \int_{\R^d} (F_z - F_x)(\phi^{\dilate{k+l}}_y) \,
\phi^{\dilate{k+l+1}}(y-z) \dif y } \dif z
\\ &\lesssim
\fint_{\abs{z-x}\leq 2^{-k-1}} \fint_{\abs{y-z} \le 2^{-k-l-2}} \abs{(F_z - F_x)(\phi^{\dilate{k}}_y)} \dif y \dif z
\\ &\leq
\fint_{\abs{z-x}\leq 2^{-k-1}} \fint_{\abs{y-z} \le 2^{-k-l-2}}
\Bigl( \abs{(F_z - F_y)(\phi^{\dilate{k}}_y)}
+ \abs{(F_y - F_x)(\phi^{\dilate{k}}_y)} \Bigr) \dif y \dif z.
\end{aligned}
\end{equation}
This is bounded by the $l$-th summands in \eqref{eq:local-coherence}.
This shows in particular the local integrability of the functions~\eqref{eq:fn}, and the $l=0$ case finishes the proof of \eqref{eq:telescoping-bound}, and therefore also the proof of Theorem~\ref{thm:reconstruction}.
\end{proof}

\section{Besov sewing and reconstruction}
\label{sec:sewing}
In this section, we show how Theorem~\ref{thm:reconstruction} can be used to recover the main estimate in the Besov space sewing lemma \cite[Theorem 3.1]{arxiv:2105.05978} in the case $p\geq 1$, assuming an additional qualitative regularity condition on the data.

For two-parameter processes $A : \R \times \R \to \R$, we use the quasinorms
\begin{equation}\label{eq:Besov-A}
\mathbb{B}^{\eta}_{p,q} A
:= \ell^{q}_{k} 2^{\eta k} \sup_{\abs{h}\leq 2^{-k}} L^{p}_{x} \abs{A(x,x+h)}.
\end{equation}
For three-parameter processes $G : \R \times \R \times \R \to \R$, we use the quasinorms
\begin{equation}\label{eq:Besov-delta-A}
\bar{B}^{\eta}_{p,q} G
:= \ell^{q}_{k} 2^{\eta k} \sup_{\abs{y'},\abs{y''}\leq 2^{-k+1}} L^{p}_{x} \abs{G(x,x+y',x+y'')}.
\end{equation}

The increments of a one-parameter process $g: \R\to\R$ and a two-parameter process $A :\R\times\R\to\R$ are defined by
\[
\delta g(s,t) = g(t)-g(s)
\quad\text{and}\quad
\delta A(s,u,t) = A(s,t)-A(s,u)-A(u,t),
\]
respectively.

\begin{theorem}[Smooth Besov sewing for $p\geq 1$, special case of {\cite[Theorem 3.1]{arxiv:2105.05978}}]
\label{thm:sewing}
Let $q \in (0,\infty]$, $p \in [1,\infty]$, and $\eta > 1$.
Let $A : \R\times\R \to \R$ be a smooth function with $\bar{B}^{\eta}_{p,q} \delta A < \infty$.
Then, there exists a function $g : \R \to \R$ with
\begin{equation}
\label{eq:sewing-bd}
\mathbb{B}^{\eta}_{p,q} (\delta g - A)
\lesssim
\bar{B}^{\eta}_{p,q} \delta A.
\end{equation}
\end{theorem}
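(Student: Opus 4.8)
The plan is to apply Theorem~\ref{thm:reconstruction} to the germ obtained by differentiating $A$ in its second variable, and then to recover \eqref{eq:sewing-bd} from the resulting primitive $g$ together with the identity $\delta(\delta g)=0$.

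First I would note that $\bar{B}^{\eta}_{p,q}\delta A<\infty$ forces $2^{\eta k}L^{p}_{x}\abs{A(x,x)}\leq\bar{B}^{\eta}_{p,q}\delta A$ for every $k\geq 0$ (take $y'=y''=0$ in \eqref{eq:Besov-delta-A}), hence $A(x,x)\equiv 0$. Using the smoothness of $A$, define the family $F_{x}\in\calD'(\R)$ by $F_{x}(\psi):=\int_{\R}\partial_{2}A(x,y)\,\psi(y)\dif y$; it depends continuously on $x$ for each fixed $\psi$, hence is Borel measurable. Since $\partial_{2}A(x',y)-\partial_{2}A(x,y)=-\partial_{y}\bigl(\delta A(x,x',y)\bigr)$, integration by parts yields
\[
(F_{x'}-F_{x})(\psi)=\int_{\R}\delta A(x,x',y)\,\psi'(y)\dif y ,\qquad x,x'\in\R ,
\]
which will be the engine of the coherence bound.

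Next I would fix $r\in(1,2)$, set $\gamma:=\eta-1>0$, and take $\calN_{k,x}H:=\ell^{q}_{k}2^{\gamma k}L^{p}_{x}H$, which satisfies the hypotheses of Theorem~\ref{thm:reconstruction} by Lemma~\ref{eq:Besov:p>1:good-for-rec}. Since $(\phi^{\dilate{k+l}}_{x+h})'$ has sup-norm $\lesssim 2^{2(k+l)}$ and support of measure $\lesssim 2^{-(k+l)}$, the identity above gives $\abs[\big]{(F_{x+h}-F_{x})(\phi^{\dilate{k+l}}_{x+h})}\lesssim 2^{k+l}\fint_{\abs{w}\leq 2^{-k-l-1}}\abs{\delta A(x,x+h,x+h+w)}\dif w$. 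Averaging over $\abs{h}\leq 2^{-k}$, applying $L^{p}_{x}$ together with Minkowski's inequality (this is the only place $p\geq 1$ is used), and bounding $L^{p}_{x}\abs{\delta A(x,x+y',x+y'')}$ by $B_{k}:=\sup_{\abs{y'},\abs{y''}\leq 2^{-k+1}}L^{p}_{x}\abs{\delta A(x,x+y',x+y'')}$ whenever $\abs{y'}\leq 2^{-k}$ and $\abs{y''}\leq 2^{-k+1}$, one gets
\[
\calN_{k,x}\fint_{\abs{h}\leq 2^{-k}}\abs[\big]{(F_{x+h}-F_{x})(\phi^{\dilate{k+l}}_{x+h})}\dif h
\lesssim 2^{l}\,\ell^{q}_{k}2^{\eta k}B_{k}=2^{l}\,\bar{B}^{\eta}_{p,q}\delta A .
\]
Thus \eqref{eq:reco-coherence} holds with $\alpha=1<r$ and $A\lesssim\bar{B}^{\eta}_{p,q}\delta A$, and Theorem~\ref{thm:reconstruction} produces $f\in\calD'(\R)$ with $\calN_{k,x}\sup_{\xi\in\calB_{r}}\abs{(f-F_{x})(\xi^{\dilate{k}}_{x})}\lesssim\bar{B}^{\eta}_{p,q}\delta A$. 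Since $F_{x}$ is the continuous function $\partial_{2}A(x,\cdot)$ and $(x,y)\mapsto\partial_{2}A(x,y)$ is locally bounded near the diagonal, the remark following Theorem~\ref{thm:reconstruction} identifies $f$ with the smooth function $f(z)=\partial_{2}A(z,z)$.

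Finally, set $g(t):=\int_{0}^{t}f(z)\dif z$ and $R(x,t):=\delta g(x,t)-A(x,t)$; then $R(x,x)=-A(x,x)=0$, $\partial_{t}R(x,t)|_{t=x}=f(x)-\partial_{2}A(x,x)=0$, and $\delta R=\delta(\delta g)-\delta A=-\delta A$, i.e.\ $R(s,t)=R(s,\tfrac{s+t}{2})+R(\tfrac{s+t}{2},t)-\delta A(s,\tfrac{s+t}{2},t)$. For $\abs{h}\leq 2^{-k}$, iterate this relation along the dyadic subdivision with nodes $x+i2^{-j}h$: the stage-$J$ remainder $\sum_{i=0}^{2^{J}-1}R(x+i2^{-J}h,x+(i+1)2^{-J}h)$ is a sum of $2^{J}$ terms each of size $O((2^{-J}h)^{2})$ — because $A$ is smooth and $R$ vanishes to second order on the diagonal — so it tends to $0$ as $J\to\infty$, and therefore
\[
R(x,x+h)=-\sum_{j\geq 0}\sum_{i=0}^{2^{j}-1}\delta A\bigl(x+i2^{-j}h,\;x+(2i+1)2^{-j-1}h,\;x+(i+1)2^{-j}h\bigr).
\]
Applying $L^{p}_{x}$, Minkowski's inequality, and the translation invariance of $L^{p}$, and noting each such triple has diameter $\leq 2^{-(k+j)}$, one obtains $L^{p}_{x}\abs{R(x,x+h)}\leq\sum_{j\geq 0}2^{j}B_{k+j}$, hence
\[
\mathbb{B}^{\eta}_{p,q}R=\ell^{q}_{k}2^{\eta k}\sup_{\abs{h}\leq 2^{-k}}L^{p}_{x}\abs{R(x,x+h)}\leq\ell^{q}_{k}\sum_{j\geq 0}2^{-(\eta-1)j}\bigl(2^{\eta(k+j)}B_{k+j}\bigr);
\]
since $\eta>1$, the weight $2^{-(\eta-1)j}$ is summable and convolution against it is bounded on $\ell^{q}$ for every $q\in(0,\infty]$, so $\mathbb{B}^{\eta}_{p,q}R\lesssim\ell^{q}_{m}2^{\eta m}B_{m}=\bar{B}^{\eta}_{p,q}\delta A$, which is \eqref{eq:sewing-bd} with $R=\delta g-A$. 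The contribution of Theorem~\ref{thm:reconstruction} is precisely to single out the primitive $g(t)=\int_{0}^{t}\partial_{2}A(z,z)\dif z$ (the pointwise estimate \eqref{eq:reco+1} is not itself used along this route); the one delicate point is to argue the vanishing of the stage-$J$ remainder purely qualitatively, using only continuity of $R$ and $\partial_{t}R$ near the diagonal and no quantitative norm of $A$, so that the constant in \eqref{eq:sewing-bd} depends on nothing beyond $\eta,p,q$.
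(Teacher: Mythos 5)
Your proposal is correct, but its second half takes a genuinely different route from the paper. The first half coincides with the paper's: you take $F_x=\partial_2A(x,\cdot)$, verify the coherence condition \eqref{eq:reco-coherence} with $\alpha=1$ by integrating by parts against $\delta A$ (this is exactly the paper's computation leading to \eqref{eq:2}), and identify the reconstruction with $z\mapsto\partial_2A(z,z)$. The divergence is in how $\mathbb{B}^{\eta}_{p,q}(\delta g-A)$ is then estimated. The paper uses the reconstruction estimate \eqref{eq:reco+1} quantitatively: it writes $g(x+h)-g(x)-A(x,x+h)$ by the fundamental theorem of calculus, splits with a dyadic partition of unity in $\theta$ anchored at the two endpoints, bounds the pieces by $\sum_l 2^{-l}\abs{h}\Delta(k+l,x)$ and $\sum_l 2^{-l}\abs{h}\Delta(k+l,x+h)$ with $\Delta$ as in \eqref{eq:4}, and disposes of the leftover by one integration by parts producing a $\delta A$ term. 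You instead run the classical dyadic sewing telescoping on $R=\delta g-A$, using $\delta R=-\delta A$ and the qualitative second-order vanishing of $R$ on the diagonal to kill the stage-$J$ remainder; all your quantitative estimates land directly on $\delta A$ and never touch $\Delta$. Your argument is sound: $R(s,s)=0$ and $\partial_tR(s,t)|_{t=s}=0$ together with smoothness of $A$ give $R(s,s+u)=O(u^2)$ locally uniformly, so the $2^J$ remainder terms of size $O((2^{-J}h)^2)$ vanish; the translation invariance of $L^p$ and Minkowski (where $p\geq1$ enters) give $L^p_x\abs{R(x,x+h)}\leq\sum_j2^jB_{k+j}$; and $\ell^q$-convolution with the geometric weight $2^{-(\eta-1)j}$ is bounded for all $q\in(0,\infty]$. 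The trade-off: as you note yourself, your route never uses \eqref{eq:reco+1}, so the appeal to Theorem~\ref{thm:reconstruction} is decorative --- for smooth $A$ one can define $g(t)=\int_0^t\partial_2A(z,z)\dif z$ outright and give the standard sewing proof. The paper's point in this section is precisely that the reconstruction \emph{estimate} implies the sewing estimate, a reduction that would survive in settings where $f$ is only a distributional limit; your argument does not exhibit that reduction, though it does prove the theorem as stated. A small bonus on your side is the observation that $A(x,x)\equiv0$ already follows from $\bar{B}^{\eta}_{p,q}\delta A<\infty$ via $\delta A(x,x,x)=-A(x,x)$, a fact the paper uses implicitly.
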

In \cite{arxiv:2105.05978}, most processes are defined on increasing pairs or triples of indices.
Given an $A$ defined on increasing pairs and vanishing on the diagonal, one can extend it to an antisymmetric function on $\R\times\R$, in the sense that $A(s,t)=-A(t,s)$.
Note that if $A$ is antisymmetric, then $\delta A$ is also antisymmetric in all its arguments.
The norms \eqref{eq:Besov-A} and \eqref{eq:Besov-delta-A} of such antisymmetric extensions are equivalent to the norms considered in \cite{arxiv:2105.05978}.

However, Theorem~\ref{thm:sewing} does not fully recover \cite[Theorem 3.1]{arxiv:2105.05978}, because in general it does not seem possible to approximate a function in the \eqref{eq:Besov-A} norm by smooth functions.
On the other hand, by using Theorem~\ref{thm:reconstruction}, we attempt to construct $g$ using smoothed versions of $A$.

Another indication that smoothing occuring in Theorem~\ref{thm:reconstruction} is harmful for recovering the sewing lemma in terms of the quasinorms \eqref{eq:Besov-A} and \eqref{eq:Besov-delta-A} is that our argument does not easily extend to $p \in (0,1)$, because it is not possible to pull the integral out of the $L^{p}$ quasinorm in \eqref{eq:2}.
This can be circumvented by considering different Besov-like quasinorms, as in \eqref{eq:besov-norm:p<1}, but there does not seem to be any advantage in doing that over running the sewing argument in \cite{arxiv:2105.05978} directly.

\begin{proof}[Proof of Theorem~\ref{thm:sewing}]
We will use Theorem~\ref{thm:reconstruction} with $\gamma=\eta-1$, the quasinorm \eqref{eq:besov-norm:p>=1}, and the family of distributions $F_{x} = D_{2}A(x,\cdot)$.
The function $\phi$ can be chosen arbitrarily with support in $B(0,1/2)$.
First, we have to verify the hypothesis \eqref{eq:reco-coherence}.
Using that $\int D\phi = 0$, we obtain
\begin{align*}
\MoveEqLeft
\abs[\Big]{ (F_{x}-F_{x+h})(\phi_{x+h}^{\dilate{k}}) }
\\ &=
\abs[\Big]{ (A(x,\cdot)-A(x+h,\cdot))(D \phi_{x+h}^{\dilate{k}}) }
\\ &=
2^{k} \abs[\Big]{ (A(x,\cdot)-A(x+h,\cdot))((D\phi)_{x+h}^{\dilate{k}}) }
\\ &=
2^{k} \abs[\Big]{ \int (A(x,y)-A(x+h,y)-A(x,x+h))(D\phi)_{x+h}^{\dilate{k}}(y) \dif y }
\\ &\lesssim
2^{k} \fint_{B(x+h,2^{-k-1})} \abs{\delta A(x,x+h,y)} \dif y.
\end{align*}
Hence,
\begin{equation}
\label{eq:2}
\begin{split}
\MoveEqLeft
\ell^{q}_{k} 2^{\gamma k} L^{p}_{x} \fint_{\abs{h} \leq 2^{-k}} \abs{ (F_{x+ h} - F_{x})(\phi^{\dilate{k+l}}_{x+ h}) } \dif h
\\ &\lesssim
\ell^{q}_{k} 2^{(\gamma+1)k+l} L^{p}_{x} \fint_{\abs{h} \leq 2^{-k}} \fint_{y \in B(x+h,2^{-k-l-1})} \abs{\delta A(x,x+h,y)} \dif y \dif h
\\ &\leq
2^{l} \ell^{q}_{k} 2^{(\gamma+1)k} \sup_{\abs{h}, \abs{h'} \leq 2^{-k+1}}
L^{p}_{x} \abs{\delta A(x,x+h,x+h')}
\\ &=
2^{l} \bar{B}^{\eta}_{p,q} \delta A.
\end{split}
\end{equation}
This shows \eqref{eq:reco-coherence} with $\alpha=1$.

Theorem~\ref{thm:reconstruction} with any $r>1$ gives us a distribution $f$ such that
\begin{equation}
\label{eq:4}
\ell^{q}_{k} 2^{\gamma k} L^{p}_{x}
\underbrace{ \sup_{\xi \in \calB_{r}} \abs{(f-D_{2}A(x,\cdot))(\xi_{x}^{\dilate{k}})} }_{=:\Delta(k,x)}
< \infty.
\end{equation}

In the proof of Theorem~\ref{thm:reconstruction}, the distribution $f$ was constructed as the distributional limit of the sequence of functions \eqref{eq:fn}.
In the current setting, the sequence \eqref{eq:fn} converges locally uniformly to the function $x \mapsto D_{2}A(x,x)$, so that $f$ coincides with this continuous function.
Let $g$ be any antiderivative of $f$ (the qualitative hypothesis that $A$ is smooth is needed to construct this antiderivative).
It remains to show \eqref{eq:sewing-bd}.

Let $\tilde{\chi} : \R \to [0,1]$ be a function supported on $[0,0.6)$ and smooth on $(0,1)$ such that $\tilde{\chi}(\theta) + \tilde{\chi}(1-\theta) = \one_{[0,1]}(\theta)$.
Let $\chi(\theta) := \tilde{\chi}(\theta) - \tilde{\chi}(2\theta)$, so that $\supp \chi \subseteq [0.2,0.6]$ and
\[
\sum_{l\in\N} \chi(2^{l} \theta) + \chi(2^{l} (1-\theta)) = \one_{(0,1)}(\theta).
\]
By the fundamental theorem of calculus, we have
\begin{align}
\MoveEqLeft \notag
g(x+h)-g(x)-A(x,x+h)
\\ &= \label{eq:near0}
\abs{h} \int_{\R} (f(x+\theta h) - D_{2}A(x,x+\theta h)) \tilde{\chi}(\theta) \dif\theta
\\ &+ \label{eq:near1}
\abs{h} \int_{\R} (f(x+\theta h) - D_{2}A(x,x+\theta h)) \tilde{\chi}(1-\theta) \dif\theta.
\end{align}
For $2^{-k-1} < \abs{h} \leq 2^{-k}$, we estimate
\begin{align*}
\abs{\eqref{eq:near0}}
& \leq
\sum_{l\geq 0} \abs[\Big]{ \int_{\R} (f- D_{2}A(x,\cdot)) \chi(2^{l}(\cdot-x)/h) }
\\ &\lesssim
\sum_{l\geq 0} 2^{-l} \abs{h} \Delta(k+l,x).
\end{align*}
In the second summand, we split
\begin{align}
\eqref{eq:near1}
&= \label{eq:near1-c}
\abs{h} \int_{\R} (f(x+\theta h) - D_{2}A(x+h,x+\theta h)) \tilde{\chi}(1-\theta) \dif\theta
\\ &+ \label{eq:near1-d}
\abs{h} \int_{\R} (D_{2}A(x+h,x+\theta h) - D_{2}A(x,x+\theta h)) \tilde{\chi}(1-\theta) \dif\theta.
\end{align}
Similarly as before, for $2^{-k-1} < \abs{h} \leq 2^{-k}$, we obtain
\[
\abs{\eqref{eq:near1-c}}
\lesssim
\sum_{l\geq 0} 2^{-l} \abs{h} \Delta(k+l,x+h).
\]
In the last term, we use partial integration:
\begin{align*}
\abs{\eqref{eq:near1-d}}
&=
\abs[\Big]{ \int_{\R} (A(x+h,x+\theta h) - A(x,x+\theta h)) D\tilde{\chi}(1-\theta) \dif\theta }
\\ &=
\abs[\Big]{ \int_{\R} \delta A(x+h,x,x+\theta h) D\tilde{\chi}(1-\theta) \dif\theta }
\\ &\leq
\int_{\R} \abs{\delta A(x+h,x,x+(1-\theta) h)} \dif \abs{D\tilde{\chi}}(\theta),
\end{align*}
where we use that the distributional derivative $D\tilde{\chi}$ is a finite measure.
Collecting the bounds for \eqref{eq:near0}, \eqref{eq:near1-c}, and \eqref{eq:near1-d}, we obtain
\begin{align*}
\MoveEqLeft
\ell^{q}_{k} 2^{\eta k} \sup_{\abs{h} \leq 2^{-k}} L^{p} \abs{g(x+h)-g(x)-A(x,x+h)}
\\ &\lesssim
\ell^{q}_{k} 2^{\eta k} \sup_{\tilde{k}\geq k} \sup_{2^{-\tilde{k}-1} < \abs{h} \leq 2^{-\tilde{k}}} L^{p}_{x}
\sum_{l\geq 0} 2^{-l-\tilde{k}} ( \Delta(\tilde{k}+l,x) + \Delta(\tilde{k}+l,x+h) )
\\ &+
\ell^{q}_{k} 2^{\eta k} \sup_{\abs{h} \leq 2^{-k}} L^{p}_{x}
\int_{\R} \abs{\delta A(x+h,x,x+(1-\theta) h)} \dif \abs{D\tilde{\chi}}(\theta)
\\ &\lesssim
\ell^{q}_{k} 2^{\eta k} \sup_{\tilde{k}\geq k} \sum_{l\geq 0} 2^{-l-\tilde{k}} L^{p}_{x} \Delta(\tilde{k}+l,x)
\\ &+
\ell^{q}_{k} 2^{\eta k} \sup_{\abs{h}, \abs{\tilde{h}} \leq 2^{-k}} L^{p}_{x}
\abs{\delta A(x+h,x,x+\tilde{h})}.
\end{align*}
The last summand is bounded directly by $\bar{B}^{\eta}_{p,q} \delta A$.
In the first summand, we have
\[
\sup_{\tilde{k}\geq k} \sum_{l\geq 0} 2^{-l-\tilde{k}} L^{p}_{x} \Delta(\tilde{k}+l,x)
=
2^{-k} \sup_{\tilde{k}\geq k} \sum_{l\geq \tilde{k}-k} 2^{-l} L^{p}_{x} \Delta(k+l,x)
=
2^{-k} \sum_{l\geq 0} 2^{-l} L^{p}_{x} \Delta(k+l,x).
\]
Since geometric series are summable in $\ell^{q}$ for any $q>0$, it remains to obtain an exponentially decreasing (in $l\geq 0$) bound for
\[
\ell^{q}_{k} 2^{\eta k} L^{p}_{x} 2^{-k-l} \Delta(k+l,x).
\]
This follows directly from \eqref{eq:4}.
\end{proof}

\section{Negative regularity}
\label{sec:negative-regularity}
In \cite{MR4261666}, a version of the reconstruction theorem is formulated also for $\gamma<0$.
To obtain a corresponding version of Theorem~\ref{thm:local-reconstruction}, one can replace the second sum over $l\geq 0$ in \eqref{eq:local-coherence} by a sum over $l\leq 0$.
The bound \eqref{eq:telescoping-bound} is then replaced by
\begin{equation}
\label{eq:telescoping-bound:gamma<0}
\sup_{\xi\in\calB_r} \abs{f_{x,k}(\xi_x^{\dilate{k}})}
+ \sum_{l\geq 0} \abs{g_{x,k+l}'(\xi_x^{\dilate{k}})}
+ \sum_{l=-k}^{0} \abs{g_{k+l}''(\xi_x^{\dilate{k}})}
\lesssim H(k,x),
\end{equation}
and the formula for $f$ is different:
\begin{align*}
f(\xi_{x}^{\dilate{k}})
&:=
\lim_{n\to\infty} (f_{n} - \sum_{m=1}^{n-1} g_{m}'')(\xi_{x}^{\dilate{k}})
\\ &=
\lim_{n\to\infty} (f_{x,n} + F_{x}*\tilde{\rho}^{\dilate{n}} - \sum_{m=1}^{n-1} g_{m}'')(\xi_{x}^{\dilate{k}})
\\ &=
\lim_{n\to\infty} (f_{x,k} + \sum_{l=0}^{n-k-1} (f_{x,k+l+1}-f_{x,k+l}) - \sum_{m=1}^{n-1} g_{m}'')(\xi_{x}^{\dilate{k}}) + F_{x}(\xi_{x}^{\dilate{k}})
\\ &=
\lim_{n\to\infty} (f_{x,k} + \sum_{l=0}^{n-k-1} (g_{x,k+l}'+g_{k+l}'') - \sum_{m=1}^{n-1} g_{m}'')(\xi_{x}^{\dilate{k}}) + F_{x}(\xi_{x}^{\dilate{k}})
\\ &=
(f_{x,k} + \sum_{l\geq 0} g_{x,k+l}' - \sum_{m=1}^{k-1} g_{m}'')(\xi_{x}^{\dilate{k}}) + F_{x}(\xi_{x}^{\dilate{k}}).
\end{align*}
It is possible to formulate a corresponding version of Theorem~\ref{thm:reconstruction},
which then applies for instance to Besov spaces of negative regularity (with quasinorm $\calN$ given by \eqref{eq:besov-norm:p>=1} with $\gamma<0$).
However, a distribution satisfying \eqref{eq:reco+1} will not be unique in this case, for instance, this condition is preserved under adding a point mass to $f$.

\printbibliography
\end{document}